\documentclass[10pt,letterpaper]{article}
\usepackage[utf8]{inputenc}
\usepackage[english]{babel}
\usepackage{amsmath}
\usepackage{amsfonts}
\usepackage{amssymb}
\usepackage{graphicx}
\usepackage{mathrsfs}
\usepackage{upref,amsthm,amsxtra,exscale}
\usepackage{cite}
\usepackage[colorlinks=true,urlcolor=blue,
citecolor=red,linkcolor=blue,linktocpage,pdfpagelabels,
bookmarksnumbered,bookmarksopen]{hyperref}

\newtheorem{theorem}{Theorem}[section]
\newtheorem{corollary}[theorem]{Corollary}
\newtheorem{lemma}[theorem]{Lemma}
\newtheorem{proposition}[theorem]{Proposition}
\newtheorem{definition}[theorem]{Definition}
\newtheorem{example}[theorem]{Example}
\newtheorem{remark}[theorem]{Remark}

\numberwithin{equation}{section}

\author{Mónica Clapp\footnote{M. Clapp was partially supported by UNAM-DGAPA-PAPIIT grant IN100718 (Mexico) and CONACYT grant A1-S-10457 (Mexico).} \quad and \quad Filomena Pacella\footnote{F. Pacella was partially supported by PRIN 2015 (Italy) and INDAM-GNAMPA (Italy).}}
\title{Existence of nonradial positive and nodal solutions to a critical Neumann problem in a cone}
\date{\today}

\begin{document}

\maketitle

\begin{abstract}
We study the critical Neumann problem
\begin{equation*}
\begin{cases}
-\Delta u = |u|^{2^*-2}u &\text{in }\Sigma_\omega,\\
\quad\frac{\partial u}{\partial\nu}=0 &\text{on }\partial\Sigma_\omega,
\end{cases}
\end{equation*}
in the unbounded cone $\Sigma_\omega:=\{tx:x\in\omega\text{ and }t>0\}$, where $\omega$ is an open connected subset of the unit sphere $\mathbb{S}^{N-1}$ in $\mathbb{R}^N$ with smooth boundary, $N\geq 3$ and $2^*:=\frac{2N}{N-2}$. We assume that some local convexity condition at the boundary of the cone is satisfied. 

If $\omega$ is symmetric with respect to the north pole of $\mathbb{S}^{N-1}$, we establish the existence of a nonradial sign-changing solution.

On the other hand, if the volume of the unitary bounded cone $\Sigma_\omega\cap B_1(0)$ is large enough (but possibly smaller than half the volume of the unit ball $B_1(0)$ in $\mathbb{R}^N$), we establish the existence of a positive nonradial solution. 
\medskip

\noindent\textbf{Keywords:} Semilinear elliptic equation, critical nonlinearity, conical domain, Neumann boundary condition, nonradial solution.
\medskip

\noindent\textbf{MSC2010:} 35J61, 35B33, 35B44, 35B09.
\end{abstract}

\section{Introduction} \label{sec:introduction}

We consider the Neumann problem
\begin{equation} \label{eq:1}
\begin{cases}
-\Delta u = |u|^{2^*-2}u &\text{in }\Sigma_\omega,\\
\quad\frac{\partial u}{\partial\nu}=0 &\text{on }\partial\Sigma_\omega,
\end{cases}
\end{equation}
in the unbounded cone $\Sigma_\omega:=\{tx:x\in\omega\text{ and }t>0\}$, where $\omega$ is an open connected subset of the unit sphere $\mathbb{S}^{N-1}$ in $\mathbb{R}^N$ with smooth boundary, $N\geq 3$, and $2^*:=\frac{2N}{N-2}$ is the critical Sobolev exponent.

It is well known that, if $\omega=\mathbb{S}^{N-1}$, i.e., if $\Sigma_\omega$ is the whole space $\mathbb{R}^{N}$, then the only positive solutions to the critical problem
\begin{equation} \label{eq:entire}
-\Delta w = |w|^{2^*-2}w,\qquad w\in D^{1,2}(\mathbb{R}^N),
\end{equation} 
are the rescalings and translations of the standard bubble $U$ defined in \eqref{eq:bubble}. Moreover, they are the only nontrivial radial solutions to \eqref{eq:entire}, up to sign. It is immediately deduced that, up to sign, the restriction of the bubbles \eqref{eq:bubble_rescaled} to $\Sigma_\omega$ are the only nontrivial radial solutions of \eqref{eq:1} in any cone; see Proposition \ref{prop:radial_energy}. In addition, if the cone $\Sigma_\omega$ is convex, it was shown in \cite[Theorem 2.4]{lpt} that these are the only positive solutions to \eqref{eq:1}. The convexity property of the cone is crucial in the proof of this result, and it is strongly related to a relative isoperimetric inequality obtained in \cite{lp}.

The aim of this paper is to establish the existence of nonradial solutions to \eqref{eq:1}, both positive and sign-changing. As mentioned above, the positive ones can only exist in nonconvex cones. On the other hand, nodal radial solutions to \eqref{eq:1} do not exist, as this would imply the existence of a nontrivial solution to problem \eqref{eq:2} in the bounded cone $\Lambda_\omega:=\{tx:x\in\omega\text{ and }t\in(0,1)\}$, which is impossible because of the Pohozhaev identity \eqref{eq:pohozhaev} and the unique continuation principle. 

For the problem \eqref{eq:entire} in $\mathbb{R}^N$ various types of sign-changing solutions are known to exist; see \cite{d,dmpp,c,fp}. In particular, a family of entire nodal solutions, which are invariant under certain groups of linear isometries of $\mathbb{R}^N$, were exhibited in \cite{c}. These solutions arise as blow-up profiles of symmetric minimizing sequences for the critical equation in a ball, and are obtained through a fine analysis of the concentration behavior of such sequences.

Here we use some ideas from \cite{c} to produce sign-changing solutions to \eqref{eq:1}, but we exploit a different kind of symmetry. Our main result shows that, if $\omega$ is symmetric with respect to the north pole of $\mathbb{S}^{N-1}$ and if the cone $\Sigma_\omega$ has a point of convexity in the sense of Definition \ref{def:pc}, then the problem \eqref{eq:1} has an axially antisymmetric least energy solution, which is nonradial and changes sign; see Theorem \ref{thm:existence_nodal}. As far as we know, this is the first existence result of a nodal solution to \eqref{eq:1}.

Next, we investigate the existence of positive nonradial solutions. In this case we do not require the cone to have any particular symmetry. We establish the existence of a positive nonradial solution to \eqref{eq:1} under some conditions involving the local convexity of $\Sigma_\omega$ at a boundary point and the measure of the bounded cone $\Lambda_\omega$; see Corollary \ref{cor:nonradial} and Theorem \ref{thm:nonradial}. We refer to Section \ref{sec:positive} for the precise statements and further remarks.

\section{A nonradial sign-changing solution}
\label{sec:nodal}

If $\Omega$ is a domain in $\mathbb{R}^N$ we consider the Sobolev space
$$D^{1,2}(\Omega):=\{u\in L^{2^*}(\Omega):\nabla u\in L^2(\Omega,\mathbb{R}^N)\}$$
with the norm
$$\|u\|_{\Omega}^2:=\int_{\Omega}|\nabla u|^2.$$
We denote by $J_\Omega:D^{1,2}(\Omega)\to\mathbb{R}$ the functional given by
$$J_\Omega(u):=\frac{1}{2}\int_{\Omega}|\nabla u|^2 - \frac{1}{2^*}\int_{\Omega}|u|^{2^*},$$
and its Nehari manifold by
$$\mathcal{N}(\Omega):=\left\{u\in D^{1,2}(\Omega):u\neq 0\;\text{ and }\;\int_{\Omega}|\nabla u|^2 = \int_{\Omega}|u|^{2^*}\right\}.$$
For $u\in D^{1,2}(\Omega)\smallsetminus\{0\}$ let $t_u\in (0,\infty)$ be such that $t_uu\in \mathcal{N}(\Omega)$. Then,
\begin{equation} \label{eq:JQ}
J_\Omega(t_uu)=\frac{1}{N}[Q_\Omega(u)]^\frac{N}{2},\qquad\text{where }\; Q_\Omega(u):=\frac{\int_{\Omega}|\nabla u|^2}{\left(\int_{\Omega}|u|^{2^*}\right)^{2/2^*}}.
\end{equation}
Hence,
\begin{equation} \label{eq:inf}
c_\Omega:=\inf_{u\in\mathcal{N}(\Omega)}J_{\Omega}(u)=\inf_{u\in D^{1,2}(\Omega)\smallsetminus\{0\}}\frac{1}{N}[Q_\Omega(u)]^\frac{N}{2}.
\end{equation}
We set $c_\infty:=c_{\mathbb{R}^N}$. It is well known that this infimum is attained at the function
\begin{equation} \label{eq:bubble}
U(x)=a_N\left(\frac{1}{1+|x|^2}\right)^\frac{N-2}{2},\qquad a_N:=(N(N-2))^\frac{N-2}{4},
\end{equation}
which is called the standard bubble, and at every rescaling and translation of it, and that
$$c_\infty=J_{\mathbb{R}^N}(U)=\frac{1}{N}S^\frac{N}{2},$$
where $S$ is the best constant for the Sobolev embedding $D^{1,2}(\mathbb{R}^N)\hookrightarrow L^{2^*}(\mathbb{R}^N)$.

Let $\mathbb{S}^{N-1}$ be the unit sphere in $\mathbb{R}^N$ and let  $\omega$ be a smooth domain in $\mathbb{S}^{N-1}$ with nonempty boundary, i.e., $\omega$ is connected and open in $\mathbb{S}^{N-1}$ and its boundary $\partial\omega$ is a smooth $(N-2)$-dimensional submanifold of $\mathbb{S}^{N-1}$. The nontrivial solutions to the Neumann problem \eqref{eq:1} in the unbounded cone 
$$\Sigma_\omega:=\{tx:x\in\omega\text{ and }t>0\}$$
are the critical points of $J_{\Sigma_\omega}$ on $\mathcal{N}(\Sigma_\omega)$.

To produce a nonradial sign-changing solution for \eqref{eq:1} we introduce some symmetries. We write a point in $\mathbb{R}^N$ as $x=(x',x_N)\in\mathbb{R}^{N-1}\times\mathbb{R}$, and consider the reflection $\varrho(x',x_N):=(-x',x_N)$. Then, a subset $X$ of $\mathbb{R}^N$ will be called $\varrho$\textbf{-invariant} if $\varrho x\in X$ for every $x\in X$, and a function $u:X\to\mathbb{R}$ will be called $\varrho$\textbf{-equivariant} if
$$u(\varrho x)=-u(x)\quad\forall x\in X.$$
Note that every nontrivial $\varrho$-equivariant function is nonradial and changes sign. 

Throughout this section we will assume that $\omega$ is $\varrho$-invariant. Note that $(0,\pm 1)\not\in\partial\omega$ because $\partial\omega$ is smooth. Hence, $\varrho x\neq x$ for every $x\in\partial\Sigma_\omega\smallsetminus\{0\}$. Our aim is to show that \eqref{eq:1} has a $\varrho$-equivariant solution. We set
$$D^{1,2}_\varrho(\Sigma_\omega):=\{u\in D^{1,2}(\Sigma_\omega):u\text{ is }\varrho\text{-equivariant}\},$$
$$\mathcal{N}^\varrho(\Sigma_\omega):=\{u\in\mathcal{N}(\Sigma_\omega):u\text{ is }\varrho\text{-equivariant}\}$$
and
\begin{equation} \label{eq:inf_rho}
c^\varrho_{\Sigma_\omega}:=\inf_{u\in\mathcal{N}^\varrho(\Sigma_\omega)}J_{\Sigma_\omega}(u)=\inf_{u\in D^{1,2}_\varrho(\Sigma_\omega)\smallsetminus\{0\}}\frac{1}{N}[Q_{\Sigma_\omega}(u)]^\frac{N}{2}.
\end{equation} 

Define 
$$\Lambda_{\omega}:=\{tx:x\in\omega\text{ and }0<t<1\}$$
and set $\Gamma_1:=\partial\Lambda_{\omega}\smallsetminus\overline{\omega}$. In $\Lambda_\omega$ we consider the mixed boundary value problem
\begin{equation} \label{eq:2}
\begin{cases}
-\Delta u = |u|^{2^*-2}u &\text{in }\Lambda_{\omega},\\
\quad u=0 &\text{on } \omega,\\
\quad\frac{\partial u}{\partial\nu}=0 &\text{on }\Gamma_1,
\end{cases}
\end{equation}
We point out that \eqref{eq:2} does not have a nontrivial solution. Indeed, by the well known Pohozhaev identity, a solution to \eqref{eq:2} must satisfy
\begin{equation} \label{eq:pohozhaev}
\frac{1}{2^*}\int_{\Gamma_1}|u|^{2^*}(s\cdot\nu)\mathrm{d}s = - \frac{1}{2}\int_{\omega}\frac{\partial u}{\partial\nu}(s\cdot\nu)\mathrm{d}s.
\end{equation}
As $s\cdot\nu=0$ for every $s\in\Gamma_1$ and $s\cdot\nu>0$ for every $s\in\omega$, we conclude that $\frac{\partial u}{\partial\nu}$ vanishes on $\omega$. Therefore, the trivial extension of $u$ to the infinite cone $\Sigma_\omega$ solves \eqref{eq:1}, contradicting the unique continuation principle.

Let $V(\Lambda_\omega)$ be the space of functions in $D^{1,2}(\Lambda_{\omega})$ whose trace vanishes on $\omega$. Note that $V(\Lambda_\omega)\subset D^{1,2}(\Sigma_{\omega})$ via trivial extension.
Let $J_{\Lambda_\omega}:V(\Lambda_\omega)\to\mathbb{R}$ be the restriction of $J_{\Sigma_\omega}$ to $V(\Lambda_\omega)$ and set
$$\mathcal{N}^\varrho(\Lambda_\omega):=\mathcal{N}^\varrho(\Sigma_\omega)\cap V(\Lambda_{\omega})\qquad\text{and}\qquad c^\varrho_{\Lambda_\omega}:=\inf_{u\in\mathcal{N}^\varrho(\Lambda_\omega)}J_{\Lambda_\omega}(u).$$ 

To produce a sign-changing solution for the problem \eqref{eq:1} we will study the concentration behavior of $\varrho$-equivariant minimizing sequences for \eqref{eq:2}. We start with the following lemmas.

\begin{lemma} \label{lem:infimum}
\;$0<c^\varrho_{\Lambda_\omega} = c^\varrho_{\Sigma_\omega} \leq c_\infty.$
\end{lemma}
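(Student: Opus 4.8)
The plan is to establish the three facts in turn: positivity of $c^\varrho_{\Lambda_\omega}$, the upper bound $c^\varrho_{\Sigma_\omega}\le c_\infty$, and the equality $c^\varrho_{\Lambda_\omega}=c^\varrho_{\Sigma_\omega}$. For the positivity, note that $c^\varrho_{\Lambda_\omega}=\inf_{u\in V(\Lambda_\omega)\cap D^{1,2}_\varrho\smallsetminus\{0\}}\frac1N[Q_{\Sigma_\omega}(u)]^{N/2}$ by \eqref{eq:JQ}, and since any $u\in V(\Lambda_\omega)$ extends trivially to $D^{1,2}(\Sigma_\omega)\subset D^{1,2}(\mathbb{R}^N)$, the Sobolev inequality gives $Q_{\Sigma_\omega}(u)=Q_{\mathbb{R}^N}(u)\ge S>0$; hence $c^\varrho_{\Lambda_\omega}\ge\frac1N S^{N/2}=c_\infty>0$. (This same argument simultaneously bounds $c^\varrho_{\Sigma_\omega}$ from below by $c_\infty$, which, combined with the equality below, is not needed but harmless.)

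For the upper bound $c^\varrho_{\Sigma_\omega}\le c_\infty$, the idea is to build a $\varrho$-equivariant test function out of a bubble and its reflected antipode. Fix a point $\xi\in\omega$ with $\xi\neq\varrho\xi$ (possible since $\partial\omega$, hence a fortiori $\omega$, avoids the poles only on its boundary, and $\omega$ is open; in any case a generic $\xi\in\omega$ has $\xi'\neq 0$), and consider $U_{\varepsilon,y}$ the bubble concentrating at $y:=R\xi$ for large $R>0$. Set $u_\varepsilon:=U_{\varepsilon,R\xi}-U_{\varepsilon,R\varrho\xi}$ suitably cut off so as to lie in $D^{1,2}_\varrho(\Sigma_\omega)$. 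As $\varepsilon\to 0$ with $R$ chosen so that both bubbles concentrate deep inside $\Sigma_\omega$ and far from each other and from $\partial\Sigma_\omega$, the interaction between the two bubbles and the truncation errors are negligible, so $Q_{\Sigma_\omega}(u_\varepsilon)\to 2^{2/N}\,2^{-2/N}S = S$ — more precisely, $\int|\nabla u_\varepsilon|^2\to 2S^{N/2}$ and $\int|u_\varepsilon|^{2^*}\to 2S^{N/2}$, so the quotient tends to $S$. Passing to the Nehari manifold via \eqref{eq:JQ} yields $c^\varrho_{\Sigma_\omega}\le\frac1N S^{N/2}=c_\infty$.

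For the equality $c^\varrho_{\Lambda_\omega}=c^\varrho_{\Sigma_\omega}$, one inequality is immediate: $\mathcal{N}^\varrho(\Lambda_\omega)\subset\mathcal{N}^\varrho(\Sigma_\omega)$ via trivial extension, so $c^\varrho_{\Sigma_\omega}\le c^\varrho_{\Lambda_\omega}$. For the reverse, I would use a dilation (scaling invariance) argument: given $u\in D^{1,2}_\varrho(\Sigma_\omega)\smallsetminus\{0\}$ and $\delta>0$, there is $t>0$ such that the dilated function $u_t(x):=u(x/t)$ satisfies $\int_{\Lambda_\omega}|u_t|^{2^*}\ge(1-\delta)\int_{\Sigma_\omega}|u|^{2^*}$; truncating $u_t$ to vanish on $\omega$ — i.e. multiplying by a cutoff supported in $\Lambda_\omega$ that equals $1$ on $\{|x|<1-\eta\}$ — produces $v\in V(\Lambda_\omega)\cap D^{1,2}_\varrho\smallsetminus\{0\}$ with $Q_{\Sigma_\omega}(v)\le Q_{\Sigma_\omega}(u)+o(1)$ as $\delta,\eta\to 0$, since $Q$ is dilation-invariant and the cutoff error is controlled by the $L^{2^*}$-mass near $|x|=1$, which can be made small by the choice of $t$. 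Taking the infimum over $u$ gives $c^\varrho_{\Lambda_\omega}\le c^\varrho_{\Sigma_\omega}$.

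The main obstacle is the construction in the upper-bound step: one must verify carefully that the two reflected bubbles can be placed inside the (possibly thin) cone $\Sigma_\omega$ with negligible mutual interaction and negligible boundary-truncation error, so that the energy of the antisymmetric sum is asymptotically twice that of a single bubble — these are standard but somewhat delicate gluing estimates. The positivity and the equality are comparatively routine once the dilation/truncation argument is set up.
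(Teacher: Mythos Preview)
There are two genuine errors in your argument.

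\medskip

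\textbf{Positivity.} Your claim that ``any $u\in V(\Lambda_\omega)$ extends trivially to $D^{1,2}(\Sigma_\omega)\subset D^{1,2}(\mathbb{R}^N)$'' is false: functions in $V(\Lambda_\omega)$ vanish only on $\omega$, not on the lateral boundary $\Gamma_1$ (this is a Neumann problem there), so their trivial extension lies in $D^{1,2}(\Sigma_\omega)$ but \emph{not} in $D^{1,2}(\mathbb{R}^N)$. In particular $Q_{\Sigma_\omega}(u)\ne Q_{\mathbb{R}^N}(u)$, and the best Sobolev constant in the cone is strictly smaller than $S$ in general --- indeed the paper later proves $c_{\Sigma_\omega}<\tfrac12 c_\infty$ under a convexity hypothesis, which would flatly contradict your bound $c^\varrho_{\Sigma_\omega}\ge c_\infty$. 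The paper simply cites \cite[Theorem 2.1]{lpt} for positivity, which establishes a Sobolev-type inequality for functions vanishing only on part of the boundary.

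\medskip

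\textbf{Upper bound.} Your computation of the quotient is wrong: if $\int|\nabla u_\varepsilon|^2\to 2S^{N/2}$ and $\int|u_\varepsilon|^{2^*}\to 2S^{N/2}$, then
\[
Q_{\Sigma_\omega}(u_\varepsilon)\to \frac{2S^{N/2}}{(2S^{N/2})^{2/2^*}}=2^{1-(N-2)/N}S=2^{2/N}S,
\]
so $\tfrac1N Q^{N/2}\to 2c_\infty$, not $c_\infty$. Placing the bubbles at \emph{interior} points only yields $c^\varrho_{\Sigma_\omega}\le 2c_\infty$. The paper's key idea is to center the bubble at a point $\xi\in\partial\Sigma_\omega\smallsetminus\{0\}$, so that the rescaled domains converge to a half-space and each bubble contributes only half the energy: $\int_{\Sigma_\omega}|\nabla u_k|^2\to \tfrac1{2N}S^{N/2}$ and likewise for the $L^{2^*}$-norm. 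Antisymmetrizing then gives a $\varrho$-equivariant test function whose Rayleigh quotient tends to $S$, yielding the correct bound $c^\varrho_{\Sigma_\omega}\le c_\infty$.

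\medskip

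Your treatment of the equality $c^\varrho_{\Lambda_\omega}=c^\varrho_{\Sigma_\omega}$ is essentially correct and matches the paper's scaling argument (the paper avoids the cutoff by first approximating with compactly supported functions and then rescaling exactly into $\Lambda_\omega$, but your version works too).
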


\begin{proof}
It is shown in \cite[Theorem 2.1]{lpt} that $c^\varrho_{\Lambda_\omega}>0$.

Since $\mathcal{N}^\varrho(\Lambda_\omega)\subset\mathcal{N}^\varrho(\Sigma_\omega)$, we have that $c^\varrho_{\Lambda_\omega} \geq c^\varrho_{\Sigma_\omega}$. To prove the opposite inequality, let $\varphi_k\in\mathcal{N}^\varrho(\Sigma_\omega)\cap\mathcal{C}^\infty(\overline{\Sigma}_\omega)$ be such that $\varphi_k$ has compact support and $J(\varphi_k)\to c^\varrho_{\Sigma_\omega}$ as $k\to\infty$. Then, we may choose $\varepsilon_k >0$ such that the support of $\widetilde{\varphi}_k(x) := \varepsilon_k^{-(N-2)/2} \varphi_k (\varepsilon_k^{-1}x)$ is contained in $\overline{\Lambda}_\omega\smallsetminus\overline{\omega}$. Thus, $\widetilde{\varphi}_k \in \mathcal{N}^\varrho(\Lambda_\omega)$ and, hence,
\begin{equation*}
c^\varrho_{\Lambda_\omega}\leq J(\widetilde{\varphi}_k) = J(\varphi_k) \quad \text{for all } k.
\end{equation*}
Letting $k \to \infty$ we conclude that $c^\varrho_{\Lambda_\omega} \leq c^\varrho_{\Sigma_\omega}$.

To prove that $c^\varrho_{\Sigma_\omega} \leq c_\infty$ we fix a point $\xi\in\partial\Sigma_\omega\smallsetminus \{0\}$ and a sequence of positive numbers $\varepsilon_k\to 0$, and we set $\Sigma_k:=\varepsilon_k^{-1}(\Sigma_\omega -\xi)$. Since $\partial\Sigma_\omega\smallsetminus \{0\}$ is smooth, the limit of the sequence of sets $(\Sigma_k)$ is the half-space
\begin{equation} \label{eq:halfspace}
\mathbb{H}_\nu:=\{z\in\mathbb{R}^N:z\cdot\nu<0\},
\end{equation}
where $\nu$ is the exterior unit normal to $\Sigma_\omega$ at $\xi$. Let $u_k(x):=\varepsilon_k^{(2-N)/2}U(\frac{x-\xi}{\varepsilon_k})$, where $U$ is the standard bubble \eqref{eq:bubble}. Then,
\begin{align} 
\lim_{k\to\infty}\int_{\Sigma_\omega}|\nabla u_k|^2=\lim_{k\to\infty}\int_{\Sigma_k}|\nabla U|^2 &=\int_{\mathbb{H}_\nu}|\nabla U|^2=\frac{1}{2N} S^\frac{N}{2},  \label{eq:limit1} \\
\lim_{k\to\infty}\int_{\Sigma_\omega}|u_k|^{2^*}=\lim_{k\to\infty}\int_{\Sigma_k}|U|^{2^*} &=\int_{\mathbb{H}_\nu}|U|^{2^*}=\frac{1}{2N}S^\frac{N}{2}.  \label{eq:limit2}
\end{align}
The function 
$$\widehat{u}_k(x)=u_k(x)-u_k(\varrho x)=\varepsilon_k^{\frac{2-N}{2}}U\left(\frac{x-\xi}{\varepsilon_k}\right)-\varepsilon_k^{\frac{2-N}{2}}U\left(\frac{x-\varrho\xi}{\varepsilon_k}\right)$$
is $\varrho$-equivariant, and from \eqref{eq:inf_rho}, \eqref{eq:limit1} and \eqref{eq:limit2} we obtain 
\begin{equation*}
c^\varrho_{\Sigma_\omega}\leq\lim_{k\to\infty}\frac{1}{N}[Q_{\Sigma_\omega}(\widehat{u}_k)]^\frac{N}{2}=\frac{1}{N}S^\frac{N}{2}=c_\infty.
\end{equation*}
This concludes the proof.
\end{proof}

\begin{lemma} \label{lem:extension}
Given a domain $\Omega$ in $\mathbb{R}^N$ and $\varepsilon>0$, we set $\Omega_{\varepsilon}:=\{\varepsilon^{-1}x:x\in\Omega\}$. If $\partial\Omega$ is Lipschitz continuous, then there exist linear \textbf{extension operators} $P_{\varepsilon}:W^{1,2}(\Omega_{\varepsilon})\to D^{1,2}(\mathbb{R}^N)$ and a positive constant $C$, independent of $\varepsilon$, such that
\begin{enumerate}
\item[$(i)$] $(P_{\varepsilon}\, u)(x)=u(x)$ for every $x\in\Omega_{\varepsilon}$.
\item[$(ii)$] $\int_{\mathbb{R}^N}|\nabla(P_{\varepsilon}\, u)|^2\leq C\int_{\Omega_{\varepsilon}}|\nabla u|^2$.
\item[$(iii)$] $\int_{\mathbb{R}^N}|P_{\varepsilon}\, u|^{2^*}\leq C\int_{\Omega_{\varepsilon}}|u|^{2^*}.$
\item[$(iv)$] If $\Omega$ is $\varrho$-invariant, then $P_{\varepsilon}\, u$ is $\varrho$-equivariant if $u$ is $\varrho$-equivariant.
\end{enumerate}
\end{lemma}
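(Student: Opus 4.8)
The claim is a uniform-in-$\varepsilon$ Sobolev extension statement. I would first recall the classical fact (Stein, or Calderón for Lipschitz domains) that for a fixed Lipschitz domain $\Omega$ there exists a bounded linear extension operator $E_\Omega:W^{1,p}(\Omega)\to W^{1,p}(\mathbb{R}^N)$ for all $1\le p<\infty$, with operator norm depending only on $N$, $p$ and the Lipschitz character of $\partial\Omega$. The key observation is that the Lipschitz character of $\Omega_\varepsilon=\varepsilon^{-1}\Omega$ is the same as that of $\Omega$ — dilation does not change Lipschitz constants of boundary graphs — so a single constant works for the whole family $\{\Omega_\varepsilon\}_{\varepsilon>0}$. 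The natural construction is therefore to pull back: given $u\in W^{1,2}(\Omega_\varepsilon)$, set $u_\varepsilon(y):=u(\varepsilon^{-1}y)$ so that $u_\varepsilon\in W^{1,2}(\Omega)$, apply the fixed extension operator $E_\Omega$, and then scale back, $P_\varepsilon u(x):=(E_\Omega u_\varepsilon)(\varepsilon x)$. Property $(i)$ is immediate from $E_\Omega$ being an extension.

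Next I would verify $(ii)$ and $(iii)$ by tracking how the $\dot W^{1,2}$ and $L^{2^*}$ norms transform under the dilation $x\mapsto\varepsilon x$. Under $y=\varepsilon x$ one has $\int_{\mathbb{R}^N}|\nabla_x(P_\varepsilon u)|^2\,dx=\varepsilon^{2-N}\int_{\mathbb{R}^N}|\nabla_y(E_\Omega u_\varepsilon)|^2\,dy$ and, similarly, $\int_{\Omega_\varepsilon}|\nabla u|^2=\varepsilon^{2-N}\int_\Omega|\nabla u_\varepsilon|^2$; for the $L^{2^*}$ norms the Jacobian factor is $\varepsilon^{-N}$, but since $2^*=2N/(N-2)$ this is exactly the scaling-invariant combination so $\int_{\mathbb{R}^N}|P_\varepsilon u|^{2^*}=\varepsilon^{-N}\int_{\mathbb{R}^N}|E_\Omega u_\varepsilon|^{2^*}$ and $\int_{\Omega_\varepsilon}|u|^{2^*}=\varepsilon^{-N}\int_\Omega|u_\varepsilon|^{2^*}$. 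Applying the boundedness of $E_\Omega$ — $\|E_\Omega u_\varepsilon\|_{W^{1,2}(\mathbb{R}^N)}\le C\|u_\varepsilon\|_{W^{1,2}(\Omega)}$, hence $\|\nabla E_\Omega u_\varepsilon\|_{L^2(\mathbb{R}^N)}+\|E_\Omega u_\varepsilon\|_{L^2(\mathbb{R}^N)}\le C(\|\nabla u_\varepsilon\|_{L^2(\Omega)}+\|u_\varepsilon\|_{L^2(\Omega)})$, together with the Sobolev inequality to pass to the $L^{2^*}$ norm on $\mathbb{R}^N$ — and cancelling the common powers of $\varepsilon$ yields $(ii)$ and $(iii)$ with the same $C$ for every $\varepsilon$. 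One mild technical point to handle is that $E_\Omega$ is a priori defined on $W^{1,2}(\Omega)$ (which only makes sense when $\Omega$ has finite measure or at least when the full $W^{1,2}$ norm is finite) while the target is $D^{1,2}(\mathbb{R}^N)$; since the excerpt only applies this lemma to bounded domains $\Omega$ and to functions already in $W^{1,2}$, the cleanest route is to state and prove it for bounded $\Omega$, where $W^{1,2}(\Omega)\hookrightarrow L^{2^*}(\Omega)$ and $E_\Omega u_\varepsilon\in W^{1,2}(\mathbb{R}^N)\cap L^{2^*}(\mathbb{R}^N)\subset D^{1,2}(\mathbb{R}^N)$.

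For $(iv)$, the point is to make the extension commute with the reflection $\varrho$. If $\Omega$ is $\varrho$-invariant, then so is $E_\Omega u_\varepsilon\circ\varrho$ is an extension of $u_\varepsilon\circ\varrho$; more usefully, define a symmetrized operator $\widetilde E_\Omega u_\varepsilon:=\tfrac12\bigl(E_\Omega u_\varepsilon-(E_\Omega(u_\varepsilon\circ\varrho))\circ\varrho\bigr)$. When $u_\varepsilon$ is $\varrho$-equivariant, $u_\varepsilon\circ\varrho=-u_\varepsilon$ on $\Omega$, so $\widetilde E_\Omega u_\varepsilon$ still restricts to $u_\varepsilon$ on $\Omega$, it is manifestly $\varrho$-equivariant by construction, and its norms are controlled by those of $E_\Omega u_\varepsilon$ (the triangle inequality costs only a harmless factor, absorbed into $C$). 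Replacing $E_\Omega$ by $\widetilde E_\Omega$ throughout — note $\widetilde E_\Omega$ is still a bounded extension operator on all of $W^{1,2}(\Omega)$, not just on equivariant functions — gives an operator that simultaneously satisfies $(i)$–$(iii)$ and, restricted to the equivariant subspace, $(iv)$. The only real obstacle anywhere in the argument is the dilation-invariance of the Lipschitz character, and that is essentially a triviality once stated; the rest is bookkeeping of scaling exponents, with the critical exponent $2^*$ arranged precisely so that the $L^{2^*}$ estimate is scale-free.
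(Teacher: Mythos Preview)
Your overall strategy coincides with the paper's: reduce $(i)$--$(iii)$ to a single extension operator on the fixed domain $\Omega$ via dilation, then antisymmetrize for $(iv)$. The paper simply cites \cite[Lemma~2.1]{gp} for $(i)$--$(iii)$ and, for $(iv)$, replaces $P_\varepsilon u$ by the function $x\mapsto\tfrac12[(P_\varepsilon u)(x)-(P_\varepsilon u)(\varrho x)]$. Two concrete issues in your write-up, however, should be fixed.

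First, your symmetrized operator $\widetilde E_\Omega u_\varepsilon=\tfrac12\bigl(E_\Omega u_\varepsilon-(E_\Omega(u_\varepsilon\circ\varrho))\circ\varrho\bigr)$ is \emph{not} an extension: for $x\in\Omega$ both summands restrict to $u_\varepsilon(x)$ (since $\varrho x\in\Omega$ and $(u_\varepsilon\circ\varrho)(\varrho x)=u_\varepsilon(x)$), so the minus sign gives identically $0$ on $\Omega$. The sign should be~$+$; or, more simply, antisymmetrize $P_\varepsilon u$ itself as the paper does. Second, your derivation of $(ii)$ does not close. From the $W^{1,2}$ bound for $E_\Omega$ you only get $\|\nabla E_\Omega u_\varepsilon\|_{L^2}^2\le C(\|\nabla u_\varepsilon\|_{L^2(\Omega)}^2+\|u_\varepsilon\|_{L^2(\Omega)}^2)$; under the dilation the two terms on the right scale as $\varepsilon^{N-2}$ and $\varepsilon^{N}$ respectively, so after ``cancelling'' you are left with
\[
\int_{\mathbb{R}^N}|\nabla P_\varepsilon u|^2\le C\Bigl(\int_{\Omega_\varepsilon}|\nabla u|^2+\varepsilon^{2}\int_{\Omega_\varepsilon}|u|^2\Bigr),
\]
not the homogeneous estimate $(ii)$. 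To obtain $(ii)$ one needs an extension bounded on the \emph{homogeneous} seminorm, which is precisely the content of the cited result in \cite{gp}; note that $(ii)$ fails outright for nonzero constants on a bounded $\Omega$, so this point is genuinely delicate and cannot be dismissed as bookkeeping.
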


\begin{proof}
The existence of an extension operator $P_{\varepsilon}:W^{1,2}(\Omega_{\varepsilon})\to D^{1,2}(\mathbb{R}^N)$ satisfying $(i)-(iii)$ is well known, and the fact that the constant $C$ does not depend on $\varepsilon$ was proved in \cite[Lemma 2.1]{gp}. To obtain $(iv)$ we replace $P_{\varepsilon}u$ by the function $x\mapsto\frac{1}{2}[(P_{\varepsilon}\,u)(x)-(P_{\varepsilon}\,u)(\varrho x)]$.
\end{proof}

The following proposition describes the behavior of minimizing sequences for $J_{\Lambda_\omega}$ on $\mathcal{N}^\varrho(\Lambda_\omega)$.

\begin{proposition} \label{prop:concentration}
Let $u_k\in\mathcal{N}^\varrho(\Lambda_\omega)$ be such that 
$$J_{\Lambda_\omega}(u_k)\to c^\varrho_{\Lambda_\omega}\qquad\text{and}\qquad J'_{\Lambda_\omega}(u_k)\to 0\text{\; in \;}(V(\Lambda_\omega))'.$$
Then, after passing to a subsequence, one of the following statements holds true:
\begin{itemize}
\item[$(i)$]There exist a sequence of positive numbers $(\varepsilon_k)$, a sequence of points $(\xi_k)$ in $\Gamma_1$ and a function $w\in D^{1,2}(\mathbb{R}^N)$ such that $\varepsilon_k^{-1}\mathrm{dist}(\xi_k,\bar{\omega}\cup\{0\})\to\infty$, $w|_\mathbb{H}$ solves the Neumann problem
\begin{equation} \label{eq:neumann_halfspace}
-\Delta w = |w|^{2^*-2}w,\qquad w\in D^{1,2}(\mathbb{H}),
\end{equation} 
in some half-space $\mathbb{H}$, $J_{\mathbb{H}}(w)=\frac{1}{2}c_\infty$, 
$$\lim_{k\to\infty}\left\|u_k - \varepsilon_k^{\frac{2-N}{2}} w\left(\frac{\,\cdot\,-\xi_k}{\varepsilon_k}\right) + \varepsilon_k^{\frac{2-N}{2}} (w\circ\varrho)\left(\frac{\,\cdot\,-\varrho\xi_k}{\varepsilon_k}\right)\right\|_{\Sigma_\omega}=0,$$
and $c^\varrho_{\Sigma_\omega}=c^\varrho_{\Lambda_\omega}=c_\infty$.
\item[$(ii)$]There exist a sequence of positive numbers $(\varepsilon_k)$ with $\varepsilon_k\to 0$, and a $\varrho$-equivariant solution $w\in D^{1,2}(\Sigma_\omega)$ to the problem \eqref{eq:1} such that
$$\lim_{k\to\infty}\left\|u_k - \varepsilon_k^{\frac{2-N}{2}} w\left(\frac{\,\cdot\,}{\varepsilon_k}\right)\right\|_{\Sigma_\omega}=0,$$
and $J_{\Sigma_\omega}(w)=c^\varrho_{\Sigma_\omega}=c^\varrho_{\Lambda_\omega}\leq c_\infty$.
\end{itemize}
\end{proposition}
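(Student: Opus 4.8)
The plan is to perform a blow-up analysis of the Palais--Smale sequence $(u_k)$, in the spirit of Struwe's global compactness lemma, but adapted both to the mixed boundary conditions of \eqref{eq:2} and to the symmetry $\varrho$. First I would record that $(u_k)$ is bounded in $D^{1,2}(\Sigma_\omega)$: since $u_k\in\mathcal{N}^\varrho(\Lambda_\omega)$, one has $\|u_k\|_{\Sigma_\omega}^2=\int_{\Lambda_\omega}|u_k|^{2^*}=NJ_{\Lambda_\omega}(u_k)\to Nc^\varrho_{\Lambda_\omega}\in(0,\infty)$. Passing to a subsequence, $u_k\rightharpoonup u$ weakly in $D^{1,2}$ and a.e.; since $J'_{\Lambda_\omega}(u_k)\to0$, the limit $u$ is a critical point of $J_{\Lambda_\omega}$, hence a solution of the mixed problem \eqref{eq:2}, which has no nontrivial solution, so $u\equiv0$. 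Therefore $u_k\to0$ in $L^p(\Lambda_\omega)$ for every $p<2^*$, while $\int_{\Lambda_\omega}|u_k|^{2^*}\to Nc^\varrho_{\Lambda_\omega}>0$; the sequence concentrates and cannot vanish.

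Next I would extract a single concentration profile. Fixing $\delta>0$ strictly below $\tfrac1{2N}S^{N/2}$ (the mass of a half-space bubble) and below $Nc^\varrho_{\Lambda_\omega}$, I choose $\varepsilon_k>0$ and $\xi_k\in\overline{\Lambda}_\omega$ with $\int_{B_{\varepsilon_k}(\xi_k)\cap\Lambda_\omega}|u_k|^{2^*}=\sup_{\xi}\int_{B_{\varepsilon_k}(\xi)\cap\Lambda_\omega}|u_k|^{2^*}=\delta$. From $u_k\rightharpoonup0$ and the Sobolev inequality one gets $\varepsilon_k\to0$, and, along a subsequence, $\xi_k\to\xi_0\in\overline{\Lambda}_\omega$. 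Put $\Omega_k:=\varepsilon_k^{-1}(\Lambda_\omega-\xi_k)$ and $v_k(z):=\varepsilon_k^{(N-2)/2}u_k(\xi_k+\varepsilon_k z)$; these are bounded in $D^{1,2}$, solve the rescaled equation, and satisfy $\sup_\xi\int_{B_1(\xi)\cap\Omega_k}|v_k|^{2^*}=\delta$. Since $\delta$ is below the least mass a bubble can carry, a standard no-further-concentration argument --- transplanting the $v_k$ into $D^{1,2}(\mathbb{R}^N)$ by the $\varepsilon$-independent extension operator of Lemma \ref{lem:extension} and invoking Sobolev --- gives $v_k\to v$ in $L^{2^*}_{\mathrm{loc}}$ with $v\not\equiv0$, where $v$ solves the limit problem on the limiting domain of $\Omega_k$: either $\mathbb{R}^N$, or a half-space with a Neumann condition, or a half-space or a wedge carrying a Dirichlet condition on part of its boundary, or the full cone $\Sigma_\omega$ with a Neumann condition (blow-up at the vertex, $\xi_0=0$).

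I would then discard all but the two stated cases. If the limit problem carries a Dirichlet condition on part of the boundary, the Pohozhaev identity, used exactly as in \eqref{eq:pohozhaev}, forces $v\equiv0$, a contradiction. If $\Omega_k\to\mathbb{R}^N$, or if $\xi_0$ lies on the symmetry axis $\{x'=0\}$, the $\varrho$-equivariance of $u_k$ passes to the limit: either $v$ and its nontrivial $\varrho$-reflection sit in two interior bubbles at mutual distance $\gg\varepsilon_k$, whose energies add up to at least $2c_\infty>c^\varrho_{\Lambda_\omega}$, a contradiction; or $v$ is antisymmetric across a hyperplane, hence vanishes on it, so its restriction to one side solves the Dirichlet problem in a half-space and must be trivial --- again a contradiction (here one uses $(0,\pm1)\notin\partial\omega$). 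The two surviving alternatives are: (a) $\xi_0\in\Gamma_1$ with $\varepsilon_k^{-1}\mathrm{dist}(\xi_k,\overline\omega\cup\{0\})\to\infty$ and $\varepsilon_k^{-1}\mathrm{dist}(\xi_k,\{x'=0\})\to\infty$, so that $v=:w|_{\mathbb{H}}$ solves the Neumann problem in a half-space $\mathbb{H}$ and $u_k$ also concentrates at $\varrho\xi_k$ with the reflected profile $-w\circ\varrho$; since every nontrivial solution of the Neumann problem in a half-space has energy at least $\tfrac12c_\infty$ and $c^\varrho_{\Lambda_\omega}\le c_\infty$ by Lemma \ref{lem:infimum}, these two profiles exhaust the total energy, which forces equality throughout, whence $J_{\mathbb{H}}(w)=\tfrac12c_\infty$, $c^\varrho_{\Sigma_\omega}=c^\varrho_{\Lambda_\omega}=c_\infty$, and the remainder $u_k-\varepsilon_k^{(2-N)/2}w(\tfrac{\cdot-\xi_k}{\varepsilon_k})+\varepsilon_k^{(2-N)/2}(w\circ\varrho)(\tfrac{\cdot-\varrho\xi_k}{\varepsilon_k})\to0$ in $D^{1,2}(\Sigma_\omega)$, which is $(i)$; (b) $\varepsilon_k\to0$ and $\Omega_k\to\Sigma_\omega$ (blow-up at the vertex), in which case the symmetry survives, $w:=v$ is a $\varrho$-equivariant solution of \eqref{eq:1} with $J_{\Sigma_\omega}(w)=c^\varrho_{\Sigma_\omega}=c^\varrho_{\Lambda_\omega}\le c_\infty$, and again $w$ exhausts the energy, so $\|u_k-\varepsilon_k^{(2-N)/2}w(\cdot/\varepsilon_k)\|_{\Sigma_\omega}\to0$, which is $(ii)$.

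The main obstacle is this last, global-compactness step: one must show that a single concentration mechanism --- one bubble at the vertex, or one $\varrho$-orbit of half-bubbles on $\Gamma_1$ --- already carries all the energy, so that the stated norm tends to $0$ with no residual bubbles, while at the same time keeping track of how the reflection $\varrho$ pairs off-axis concentration points and constrains on-axis ones. This calls for the careful comparison of mutually interacting profiles near the vertex, near the smooth Neumann part $\Gamma_1$, near the Dirichlet part $\omega$ and near the junction $\partial\omega$, for the $\varepsilon$-independent extension operator of Lemma \ref{lem:extension} (to apply the Sobolev inequality uniformly in $k$), and for the energy quantization into multiples of $\tfrac12c_\infty$ imposed by $c^\varrho_{\Lambda_\omega}\le c_\infty$ --- which is exactly what forbids splitting and yields the clean dichotomy.
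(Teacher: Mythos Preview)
Your overall strategy matches the paper's: blow up, classify the limit domain according to where $\xi_k$ sits, rule out all cases except vertex concentration and half-space Neumann concentration on $\Gamma_1$, and use the energy ceiling $c^\varrho_{\Lambda_\omega}\le c_\infty$ from Lemma~\ref{lem:infimum} to show the extracted profile exhausts the energy. The Dirichlet cases near $\omega$ and near $\partial\omega$, the off-axis interior case, and the energy accounting in cases $(i)$ and $(ii)$ are handled essentially as in the paper.

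There is, however, a genuine error in your treatment of the on-axis interior case (i.e.\ $\xi_k=\varrho\xi_k$ with $\varepsilon_k^{-1}\mathrm{dist}(\xi_k,\partial\Lambda_\omega)\to\infty$). You write that the limit $v$ is ``antisymmetric across a hyperplane, hence vanishes on it, so its restriction to one side solves the Dirichlet problem in a half-space.'' But $\varrho(x',x_N)=(-x',x_N)$ with $x'\in\mathbb{R}^{N-1}$ is \emph{not} a reflection across a hyperplane when $N\ge 3$: its fixed-point set is the one-dimensional axis $\{x'=0\}$. So $\varrho$-equivariance only forces $v$ to vanish on a line, and no half-space Dirichlet problem arises; your contradiction does not go through. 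The remark ``here one uses $(0,\pm1)\notin\partial\omega$'' does not help either, since that condition concerns boundary points, not interior ones.

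The paper's way out is different and does work: a nontrivial $\varrho$-equivariant $v$ is sign-changing, and any sign-changing solution of \eqref{eq:entire} has energy at least $2c_\infty$ (test the equation against $v^\pm$ to put each on the Nehari manifold, or invoke \cite{we} for the strict inequality). Since
\[
\tfrac{1}{N}\|v\|_{\mathbb{R}^N}^2\le \liminf_k\tfrac{1}{N}\|w_k\|_{\mathbb{R}^N}^2=c^\varrho_{\Lambda_\omega}\le c_\infty<2c_\infty,
\]
this case is excluded. Replace your hyperplane argument by this energy bound and the proof goes through.
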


\begin{proof}
Since
\begin{equation}
\label{eq:bounded}
\frac{1}{N}\|u_k\|^2_{\Lambda_\omega} = J_{\Lambda_\omega}(u_k) - \frac{1}{2^*}J'_{\Lambda_\omega}(u_k)u_k \leq C+o(1)\|u_k\|_{\Lambda_\omega},
\end{equation}
the sequence $(u_k)$ is bounded and, after passing to a subsequence, $u_k\rightharpoonup u$ weakly in $V(\Lambda_{\omega})$. Then, $J'_{\Lambda_\omega}(u)=0$. Since the problem \eqref{eq:2} does not have a nontrivial solution, we conclude that $u=0$.  

Fix $\delta \in (0,\frac{N}{2}c^\varrho_{\Lambda_\omega})$. As
$$\int_{\Lambda_\omega}|u_k|^{2^*} = N\left(J_{\Lambda_\omega}(u_k) - \frac{1}{2}J'_{\Lambda_\omega}(u_k)u_k\right) \to Nc^{\varrho}_{\Lambda_\omega},$$
there are bounded sequences $(\varepsilon_k)$ in $(0,\infty)$ and $(x_k)$ in $\mathbb{R}^N$ such that, after passing to a subsequence,
\begin{equation*}
  \delta = \sup_{x\in\mathbb{R}^N}\int_{\Lambda_\omega\cap B_{\varepsilon_k}(x)}|u_k|^{2^*}=\int_{\Lambda_\omega\cap B_{\varepsilon_k}(x_k)}|u_k|^{2^*},
\end{equation*}
where $B_r(x):=\{y\in \mathbb{R}^N:|y-x|<r\}$. Note that, as $\delta>0$, we have that $\mathrm{dist}(x_k,\Lambda_\omega)<\varepsilon_k$. We claim that, after passing to a subsequence, there exist $\xi_k\in\bar{\Lambda}_\omega$ and $C_0>0$ such that
\begin{equation} \label{eq:C0}
\varepsilon_k^{-1}|x_k-\xi_k|<C_0\qquad\forall k\in\mathbb{N},
\end{equation}
and one of the following statements holds true:
\begin{itemize}
\item[$(a)$] $\xi_k=0$ for all $k\in\mathbb{N}$.
\item[$(b)$] $\xi_k\in\partial\omega=\overline{\omega}\cap\overline{\Gamma}_1$ for all $k\in\mathbb{N}$.
\item[$(c)$] $\xi_k\in\Gamma_1$ for all $k\in\mathbb{N}$ and $\varepsilon_k^{-1}\mathrm{dist}(\xi_k,\bar{\omega}\cup\{0\})\to \infty$.
\item[$(d)$] $\xi_k\in\omega$ for all $k\in\mathbb{N}$ and $\varepsilon_k^{-1}\mathrm{dist}(\xi_k,\Gamma_1)\to \infty$.
\item[$(e)$] $\xi_k\in\Lambda_\omega$ for all $k\in\mathbb{N}$, $\varepsilon_k^{-1}\mathrm{dist}(\xi_k,\partial\Lambda_\omega)\to\infty$ and, either $\varepsilon_k^{-1}|\xi_k-\varrho\xi_k|\to\infty$, or $\xi_k=\varrho\xi_k$ for all $k\in\mathbb{N}$.
\end{itemize}
This can be seen as follows: If the sequence $(\varepsilon_k^{-1}|x_k|)$ is bounded, we set $\xi_k:=0$. Then, \eqref{eq:C0} and $(a)$ hold true.  If $(\varepsilon_k^{-1}\mathrm{dist}(x_k,\partial\omega))$ is bounded, we take $\xi_k\in\partial\omega$ such that $|x_k-\xi_k|=\mathrm{dist}(x_k,\partial\omega)$. Then, \eqref{eq:C0} and $(b)$ hold true. If both $(\varepsilon_k^{-1}|x_k|)$ and $(\varepsilon_k^{-1}\mathrm{dist}(x_k,\partial\omega))$ are unbounded and $(\varepsilon_k^{-1}\mathrm{dist}(x_k,\Gamma_1))$ is bounded, we take $\xi_k\in\Gamma_1$ with $|x_k-\xi_k|=\mathrm{dist}(x_k,\Gamma_1)$. Then, \eqref{eq:C0} and $(c)$ hold true. If $(\varepsilon_k^{-1}\mathrm{dist}(x_k,\Gamma_1))$ is unbounded and $(\varepsilon_k^{-1}\mathrm{dist}(x_k,\omega))$ is bounded, we take $\xi_k\in\omega$ with $|x_k-\xi_k|=\mathrm{dist}(x_k,\omega)$. Then, \eqref{eq:C0} and $(d)$ hold true. Finally, if $(\varepsilon_k^{-1}\mathrm{dist}(x_k,\partial\Lambda_\omega))$ is unbounded, we set $\xi_k:=\frac{x_k+\varrho x_k}{2}$ if $(\varepsilon_k^{-1}|x_k-\varrho x_k|)$ is bounded and $\xi_k:=x_k$ if $(\varepsilon_k^{-1}|x_k-\varrho x_k|)$ is unbounded. Then, \eqref{eq:C0} and $(e)$ hold true.

Set $C_1:=C_0+1$. Inequality \eqref{eq:C0} yields
\begin{equation} \label{eq:delta}
\delta=\int_{\Lambda_\omega\cap B_{\varepsilon_k}(x_k)}|u_k|^{2^*} \leq \int_{\Lambda_\omega\cap B_{C_1\varepsilon_k}(\xi_k)}|u_k|^{2^*}.
\end{equation}
We consider $u_k$ as a function in $D^{1,2}(\Sigma_\omega)$ via trivial extension, and we define $\widehat{u}_k\in D^{1,2}(\Sigma_\omega)$ as $\widehat{u}_k(z):=\varepsilon_k^{(N-2)/2}u_k(\varepsilon_kz)$. Since $\widehat{u}_k$ is $\varrho$-equivariant, so is its extension $P_{\varepsilon_k}\widehat{u}_k\in D^{1,2}(\mathbb{R}^N)$ given by Lemma \ref{lem:extension}. Let
$$w_k(z):=(P_{\varepsilon_k}\widehat{u}_k)(z+\varepsilon_k^{-1}\xi_k)\in D^{1,2}(\mathbb{R}^N).$$
Then, 
\begin{align} 
w_k(z)=\varepsilon_k^\frac{N-2}{2}u_k(\varepsilon_kz + \xi_k)&\qquad\text{if }\;z\in\Lambda_k:=\varepsilon_k^{-1}(\Lambda_\omega-\xi_k),\label{eq:rescale} \\
w_k\left(z-\varepsilon_k^{-1}\xi_k\right)=-w_k\left(\varrho z-\varepsilon_k^{-1}\xi_k\right)&\qquad\text{for every }z\in \mathbb{R}^N,\label{eq:symmetry}
\end{align}
\begin{equation} \label{eq:nontrivial}
\delta = \sup_{z\in\mathbb{R}^N}\int_{\Lambda_k\cap B_1(z)}|w_k|^{2^*}\leq\int_{\Lambda_k\cap B_{C_1}(0)}|w_k|^{2^*},
\end{equation}
and $(w_k)$ is bounded in $D^{1,2}(\mathbb{R}^N)$. Hence, a subsequence satisfies that $w_k \rightharpoonup w$ weakly in $D^{1,2}(\mathbb{R}^N)$, $w_k \to w$ a.e. in $\mathbb{R}^N$ and $w_k \to w$ strongly in $L^2_{\mathrm{loc}}(\mathbb{R}^N)$. Choosing $\delta$ sufficiently small and using \eqref{eq:nontrivial}, a standard argument shows that $w\neq 0$; see, e.g., \cite[Section 8.3]{w}. Moreover, we have that $\xi_k \to \xi$ and $\varepsilon_k \to 0$, because $u_k \rightharpoonup 0$ weakly in $V(\Lambda_{\omega})$ and $w\neq 0$.

Let $\mathbb{E}$ be the limit of the domains $\Lambda_k$. Since $(w_k)$ is bounded in $D^{1,2}(\mathbb{R}^N)$, using Hölder's inequality we obtain
\begin{align*}
\left|\int_{\mathbb{E}\smallsetminus\Lambda_k}\nabla w_k\cdot\nabla \varphi\right| &\leq C\left(\int_{\mathbb{E}\smallsetminus\Lambda_k}|\nabla \varphi|^2\right)^\frac{1}{2}=o(1),\\
\left|\int_{\mathbb{E}\smallsetminus\Lambda_k}|w_k|^{2^*-2}w_k\varphi\right| &\leq C\left(\int_{\mathbb{E}\smallsetminus\Lambda_k}|\varphi|^{2^*}\right)^{\frac{1}{2^*}}=o(1),
\end{align*}
for every $\varphi\in\mathcal{C}^\infty_c(\mathbb{R}^N)$, and similarly for the integrals over $\Lambda_k\smallsetminus\mathbb{E}$. Therefore, as $w_k \rightharpoonup w$ weakly in $D^{1,2}(\mathbb{E})$, rescaling and using \eqref{eq:rescale} we conclude that
\begin{align}
&\int_\mathbb{E}\nabla w\cdot\nabla \varphi - \int_\mathbb{E}|w|^{2^*-2}w\varphi =\int_\mathbb{E}\nabla w_k\cdot\nabla \varphi - \int_\mathbb{E}|w_k|^{2^*-2}w_k\varphi + o(1)\nonumber \\
&\qquad=\int_{\Lambda_k}\nabla w_k\cdot\nabla\varphi - \int_{\Lambda_k}|w_k|^{2^*-2}w_k\varphi + o(1) \nonumber\\
&\qquad=\int_{\Lambda_\omega}\nabla u_k\cdot\nabla\varphi_k- \int_{\Lambda_\omega}|u_k|^{2^*-2}u_k\varphi_k + o(1),\label{eq:solution}
\end{align}
where $\varphi_k(x):=\varepsilon_k^{(2-N)/2}\varphi(\frac{x-\xi_k}{\varepsilon_k})$. Next, we analyze all possibilities, according to the location of $\xi_k$.
\begin{enumerate}
\item[$(a)$] If $\xi_k=0$ for all $k\in\mathbb{N}$, then $\mathbb{E}=\Sigma_\omega$ and $w_k$ is $\varrho$-equivariant. Hence, $w$ is $\varrho$-equivariant. Let $\varphi\in\mathcal{C}^\infty_c(\mathbb{R}^N)$. Then, $\varphi_k|_{\Lambda_\omega}\in V(\Lambda_\omega)$ for large enough $k$, and from \eqref{eq:solution} we obtain
\begin{align*}
J'_{\Sigma_\omega}(w)[\varphi|_{\Sigma_\omega}]=\int_{\Sigma_\omega}\nabla w\cdot\nabla\varphi - \int_{\Sigma_\omega}|w|^{2^*-2}w\varphi=J'_{\Lambda_\omega}(u_k)[\varphi_k|_{\Lambda_\omega}]=o(1).
\end{align*}
This shows that $w|_{\Sigma_\omega}$ solves \eqref{eq:1}. Therefore,
$$c^\varrho_{\Sigma_\omega}\leq \frac{1}{N}\|w\|^2_{\Sigma_\omega} \leq \liminf_{k\to\infty}\frac{1}{N}\|w_k\|^2_{\Sigma_\omega}=\lim_{k\to\infty}\frac{1}{N}\|u_k\|^2_{\Lambda_\omega}=c^\varrho_{\Lambda_\omega}.$$
Together with Lemma \ref{lem:infimum}, this implies that $J_{\Sigma_\omega}(w)=c^\varrho_{\Sigma_\omega}=c^\varrho_{\Lambda_\omega}\leq c_\infty$ and
$$o(1)=\|w_k-w\|_{\Sigma_\omega}=\left\|u_k-\varepsilon_k^\frac{2-N}{2}w\left(\frac{\cdot}{\varepsilon_k}\right)\right\|_{\Sigma_\omega}.$$
So, in this case, we obtain statement $(ii)$.

\item[$(b)$] If $\xi_k\in\partial\omega$ for all $k\in\mathbb{N}$, then $\mathbb{E}=\mathbb{H}_\xi\cap\mathbb{H}_\nu$, where $\xi=\lim_{k\to\infty}\xi_k$, \;$\nu$ is the exterior unit normal to $\Sigma_\omega$ at $\xi$, and $\mathbb{H}_\xi$ and $\mathbb{H}_\nu$ are half-spaces defined as in \eqref{eq:halfspace}. If $\varphi\in\mathcal{C}^\infty_c(\mathbb{H}_\xi)$, then $\varphi_k|_{\Lambda_\omega}\in V(\Lambda_\omega)$ for large enough $k$, and using \eqref{eq:solution} we conclude that $w|_\mathbb{E}$ solves the mixed boundary value problem
$$-\Delta w = |w|^{2^*-2}w,\qquad w=0\text{ on }\partial\mathbb{E}\cap\partial\mathbb{H}_\xi,\qquad \frac{\partial w}{\partial\nu}=0\text{ on }\partial\mathbb{E}\cap\partial\mathbb{H}_\nu.$$
Since $\xi$ and $\nu$ are orthogonal, extending $w|_\mathbb{E}$ by reflection on $\partial\mathbb{E}\cap\partial\mathbb{H}_\nu$, yields a nontrivial solution to the Dirichlet problem
\begin{equation} \label{eq:dirichlet_halfspace}
-\Delta w = |w|^{2^*-2}w,\qquad w\in D^{1,2}_0(\mathbb{H}_\xi).
\end{equation} 
It is well known that this problem does not have a nontrivial solution, so $(b)$ cannot occur.

\item[$(c)$] If $\xi_k\in\Gamma_1$ for all $k\in\mathbb{N}$ and $\varepsilon_k^{-1}\mathrm{dist}(\xi_k,\bar{\omega}\cup\{0\})\to \infty$, then $\mathbb{E}=\mathbb{H}_\nu$, where $\nu$ is the exterior unit normal to $\Sigma_\omega$ at $\xi=\lim_{k\to\infty}\xi_k$. Using \eqref{eq:solution} we conclude that $w|_{\mathbb{H}_\nu}$ solves the Neumann problem \eqref{eq:neumann_halfspace} in $\mathbb{H}_\nu$. Since  $\varepsilon_k^{-1}|\xi_k|\to\infty$, we have that $\varepsilon_k^{-1}|\xi_k-\varrho\xi_k|\to\infty$. Therefore, 
$$w_k-(w\circ\varrho)(\,\cdot\,+\varepsilon_k^{-1}(\xi_k-\varrho\xi_k))\rightharpoonup w\qquad\text{weakly in }D^{1,2}(\mathbb{R}^{N}).$$ 
Note also that $w_k\circ\varrho\rightharpoonup w\circ\varrho$ weakly in $D^{1,2}(\mathbb{R}^{N})$. Using these facts and performing suitable rescalings and translations we obtain
\begin{align*}
&\left\|u_k - \varepsilon_k^{\frac{2-N}{2}} w\left(\frac{\,\cdot\,-\xi_k}{\varepsilon_k}\right) + \varepsilon_k^{\frac{2-N}{2}} (w\circ\varrho)\left(\frac{\,\cdot\,-\varrho\xi_k}{\varepsilon_k}\right)\right\|_{\Sigma_\omega}^2\\
&=\left\|\widehat{u}_k - w(\,\cdot\,-\varepsilon_k^{-1}\xi_k) + (w\circ\varrho)(\,\cdot\,-\varepsilon_k^{-1}\varrho\xi_k)\right\|_{\Sigma_\omega}^2\\
&=\left\|w_k - w + (w\circ\varrho)\left(\,\cdot\,+\varepsilon_k^{-1}(\xi_k-\varrho\xi_k)\right)\right\|_{\Sigma_\omega-\varepsilon_k^{-1}\xi_k}^2\\
&=\left\|w_k + (w\circ\varrho)\left(\,\cdot\,+\varepsilon_k^{-1}(\xi_k-\varrho\xi_k)\right)\right\|_{\Sigma_\omega-\varepsilon_k^{-1}\xi_k}^2 - \|w\|^2_{\mathbb{H}_\nu} + o(1)\\
&=\left\|-w_k\circ\varrho + w\circ\varrho\right\|_{\Sigma_\omega-\varepsilon_k^{-1}\varrho\xi_k}^2 - \|w\|^2_{\mathbb{H}_\nu} + o(1)\\
&=\left\|\widehat{u}_k\right\|_{\Sigma_\omega}^2 - 2\|w\|^2_{\mathbb{H}_\nu} + o(1)\\
&=\left\|u_k\right\|_{\Lambda_\omega}^2 - 2\|w\|^2_{\mathbb{H}_\nu} + o(1)= Nc_{\Lambda_\omega}^\varrho - 2\|w\|^2_{\mathbb{H}_\nu}+o(1).
\end{align*}
Since $J_{\mathbb{H}_\nu}(w)=\frac{1}{N}\|w\|^2_{\mathbb{H}_\nu}\geq \frac{1}{2}c_\infty$, applying Lemma \ref{lem:infimum} we conclude that $J_{\mathbb{H}_\nu}(w)=\frac{1}{2}c_\infty$, \,$c_{\Sigma_\omega}^\varrho=c_{\Lambda_\omega}^\varrho=c_\infty$, and 
$$\lim_{k\to\infty}\left\|u_k - \varepsilon_k^{\frac{2-N}{2}} w\left(\frac{\,\cdot\,-\xi_k}{\varepsilon_k}\right) + \varepsilon_k^{\frac{2-N}{2}} (w\circ\varrho)\left(\frac{\,\cdot\,-\varrho\xi_k}{\varepsilon_k}\right)\right\|_{\Sigma_\omega}^2=0.$$
So, in this case we obtain statement $(i)$.

\item[$(d)$] If $\xi_k\in\omega$ for all $k\in\mathbb{N}$ and $\varepsilon_k^{-1}\mathrm{dist}(\xi_k,\Gamma_1)\to \infty$, then $\mathbb{E}=\mathbb{H}_\xi$ and using \eqref{eq:solution} we conclude that $w|_{\mathbb{H}_\xi}$ solves the Dirichlet problem \eqref{eq:dirichlet_halfspace}. So this case does not occur.

\item[$(e)$] If $\xi_k\in\Lambda_\omega$ for all $k\in\mathbb{N}$ and $\varepsilon_k^{-1}\mathrm{dist}(\xi_k,\partial\Lambda_\omega)\to\infty$, then $\mathbb{E}=\mathbb{R}^N$ and $w$ solves the problem \eqref{eq:entire}. 
If $\rho\xi_k=\xi_k$ for every $k$, then $w_k$ is $\varrho$-equivariant, and so is $w$. Since $w$ is a sign-changing solution to \eqref{eq:entire} we have that
$$2c_\infty < \frac{1}{N}\|w\|^2_{\mathbb{R}^N} \leq \lim_{k\to\infty}\frac{1}{N}\|w_k\|^2_{\mathbb{R}^N} = \lim_{k\to\infty}\frac{1}{N}\|u_k\|^2_{\Lambda_\omega} = c^{\varrho}_{\Lambda_\omega},$$
contradicting Lemma \ref{lem:infimum}. On the other hand, if $\varepsilon_k^{-1}|\varrho\xi_k-\xi_k|\to\infty$, then, arguing as in case $(c)$, we conclude that
$$2c_\infty \leq \frac{2}{N}\|w\|^2_{\mathbb{R}^N}\leq \lim_{k\to\infty}\frac{1}{N}\|w_k\|^2_{\mathbb{R}^N} = \lim_{k\to\infty}\frac{1}{N}\|u_k\|^2_{\Lambda_\omega} = c^{\varrho}_{\Lambda_\omega},$$
contradicting Lemma \ref{lem:infimum} again. So $(e)$ cannot occur.
\end{enumerate}
We are left with $(a)$ and $(c)$. This concludes the proof.
\end{proof}

Proposition \ref{prop:concentration} immediately yields the following result.

\begin{corollary} \label{cor:existence_nodal}
If $c^{\varrho}_{\Sigma_\omega}<c_\infty$, then the problem \eqref{eq:1} has a $\varrho$-equivariant least energy solution in $D^{1,2}(\Sigma_\omega)$.
\end{corollary}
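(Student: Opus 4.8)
The plan is to produce a Palais--Smale sequence for $J_{\Lambda_\omega}$ on $\mathcal{N}^\varrho(\Lambda_\omega)$ at the level $c^\varrho_{\Lambda_\omega}$ and feed it into Proposition \ref{prop:concentration}. First I would record that, by Lemma \ref{lem:infimum} together with the hypothesis $c^\varrho_{\Sigma_\omega}<c_\infty$, one has $c^\varrho_{\Lambda_\omega}=c^\varrho_{\Sigma_\omega}<c_\infty$.

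Next I would observe that $\mathcal{N}^\varrho(\Lambda_\omega)$ is a complete $C^1$ manifold inside the closed subspace $V_\varrho(\Lambda_\omega):=\{u\in V(\Lambda_\omega):u\text{ is }\varrho\text{-equivariant}\}$, that it is bounded away from $0$ by the Sobolev inequality, and that on it $J_{\Lambda_\omega}$ coincides with $u\mapsto\frac1N\|u\|_{\Lambda_\omega}^2$, hence is bounded below by $c^\varrho_{\Lambda_\omega}>0$. Applying Ekeland's variational principle on this manifold yields a sequence $u_k\in\mathcal{N}^\varrho(\Lambda_\omega)$ with $J_{\Lambda_\omega}(u_k)\to c^\varrho_{\Lambda_\omega}$ and $\bigl(J_{\Lambda_\omega}\big|_{\mathcal{N}^\varrho(\Lambda_\omega)}\bigr)'(u_k)\to 0$. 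The standard argument that the Nehari manifold is a natural constraint (the Lagrange multiplier is forced to vanish because $J'_{\Lambda_\omega}(u_k)u_k=0$ and $\frac{d}{dt}\big|_{t=1}\bigl(J'_{\Lambda_\omega}(tu_k)(tu_k)\bigr)$ is bounded away from $0$, uniformly in $k$) then gives $J'_{\Lambda_\omega}(u_k)\to 0$ in $(V_\varrho(\Lambda_\omega))'$. Since the functional is invariant under the isometric $\mathbb{Z}/2$-action $u\mapsto -u\circ\varrho$ on $V(\Lambda_\omega)$, whose fixed-point space is $V_\varrho(\Lambda_\omega)$, the principle of symmetric criticality upgrades this to $J'_{\Lambda_\omega}(u_k)\to 0$ in $(V(\Lambda_\omega))'$, so $(u_k)$ satisfies the hypotheses of Proposition \ref{prop:concentration}.

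Now apply Proposition \ref{prop:concentration}. Alternative $(i)$ there asserts $c^\varrho_{\Sigma_\omega}=c_\infty$, which is excluded by hypothesis; hence alternative $(ii)$ holds, providing a $\varrho$-equivariant function $w\in D^{1,2}(\Sigma_\omega)$ that solves \eqref{eq:1} and satisfies $J_{\Sigma_\omega}(w)=c^\varrho_{\Sigma_\omega}=c^\varrho_{\Lambda_\omega}$. In particular $J_{\Sigma_\omega}(w)=c^\varrho_{\Sigma_\omega}>0$, so $w\neq 0$; being $\varrho$-equivariant, it is nonradial and changes sign. Finally, every nontrivial $\varrho$-equivariant solution of \eqref{eq:1} lies on $\mathcal{N}^\varrho(\Sigma_\omega)$, hence has energy at least $c^\varrho_{\Sigma_\omega}$, so $w$ realizes the least energy among such solutions, which is the assertion.

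I expect the only delicate point to be the second paragraph: checking carefully that the Ekeland sequence on the constrained manifold is an honest (free) Palais--Smale sequence for $J_{\Lambda_\omega}$ on $V(\Lambda_\omega)$ — that is, that the Nehari constraint is natural and that restricting to the $\varrho$-equivariant subspace loses no information about the derivative. Everything after that is a direct invocation of Proposition \ref{prop:concentration} and the definition of least energy solution.
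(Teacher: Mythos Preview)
Your argument is correct and is exactly the approach the paper intends: the paper states that Proposition~\ref{prop:concentration} ``immediately yields'' the corollary, and you have simply unpacked the two implicit steps --- producing a $\varrho$-equivariant Palais--Smale sequence at level $c^\varrho_{\Lambda_\omega}$ via Ekeland and the natural-constraint/symmetric-criticality arguments, and then ruling out alternative~$(i)$ because it forces $c^\varrho_{\Sigma_\omega}=c_\infty$. The extra remarks you make (nonradiality, sign change, and minimality among $\varrho$-equivariant solutions) are correct as well.
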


Equality is not enough, as the following example shows. Set
$$\mathbb{S}^{N-1}_+:=\{(x_1,\ldots,x_N)\in\mathbb{S}^{N-1}:x_N>0\}.$$

\begin{example}
If $\omega=\mathbb{S}^{N-1}_+$, then problem \eqref{eq:1} does not have a $\varrho$-equivariant least energy solution in $D^{1,2}(\Sigma_\omega)$.
\end{example}

\begin{proof}
$\Sigma_\omega$ is the upper half-space $\mathbb{R}^N_+:=\{(x_1,\ldots,x_N)\in\mathbb{R}^N:x_N>0\}$. If $u$ were a $\varrho$-equivariant least energy solution to \eqref{eq:1} in $\mathbb{R}^N_+$ then, extending $u$ by reflection on $\partial(\mathbb{R}^N_+)$, would yield a sign-changing solution $\widetilde{u}$ to the problem \eqref{eq:entire} in $\mathbb{R}^N$ with $J_{\mathbb{R}^N}(\widetilde{u})\leq 2c_\infty$. But the energy of any sign-changing solution to \eqref{eq:entire} is $>2c_\infty$; see \cite{we}.
\end{proof}

The following local geometric condition guarantees the existence of a minimizer. It was introduced by Adimurthi and Mancini in \cite{am}.

\begin{definition} \label{def:pc}
A point $\xi\in\partial\omega$ is a \textbf{point of convexity of $\Sigma_\omega$ of radius} $r>0$ if \,$B_r(\xi)\cap\Sigma_\omega\subset\mathbb{H}_\nu$\, and the mean curvature of $\partial\Sigma_\omega$ at $\xi$ with respect to the exterior unit normal $\nu$ at $\xi$ is positive.
\end{definition}

As in \cite{am} we make the convention that the curvature of a geodesic in $\partial\Sigma_\omega$ is positive at $\xi$ if it curves away from the exterior unit normal $\nu$. The half-space $\mathbb{H}_\nu$ is defined as in \eqref{eq:halfspace}. Examples of cones having a point of convexity are given as follows.

\begin{proposition} 
If $\overline{\omega}\subset\mathbb{S}^{N-1}_+$, then $\Sigma_\omega$ has a point of convexity.
\end{proposition}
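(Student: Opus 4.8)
The plan is to exhibit an explicit point of convexity by squeezing $\Sigma_\omega$ between its tangent hyperplane at a well-chosen boundary point $\xi$ and a convex circular cone tangent to it there. Concretely, let $e_N=(0,\dots,0,1)$ be the north pole and $g(x):=x_N$. Let $\xi\in\overline\omega$ minimize $g$ over the compact set $\overline\omega$ and put $c:=g(\xi)$; since $\overline\omega\subset\mathbb{S}^{N-1}_+$ we have $c\in(0,1)$. On $\mathbb{S}^{N-1}$ the function $g$ has only the critical points $\pm e_N$, which lie outside $\overline\omega$, so the minimum is not attained in the open set $\omega$; hence $\xi\in\partial\omega$ and $x_N>c$ for all $x\in\omega$. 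Introduce the cap $\omega':=\{x\in\mathbb{S}^{N-1}:x_N>c\}$ and the cone $K:=\Sigma_{\omega'}=\{y\ne 0:y_N>c\,|y|\}$. Since $0<c<1$ one checks that $K=\{y:y_N>\frac{c}{\sqrt{1-c^2}}\,|y'|\}$ is convex with boundary smooth away from $0$, and $\Sigma_\omega\subset K$ with $\xi\in\partial\Sigma_\omega\cap\partial K$.

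Next I would check that $\partial\Sigma_\omega$ and $\partial K$ are tangent at $\xi$ with matching orientation. Since $g\le c$ on $\overline\omega$ with equality at $\xi\in\partial\omega$, the point $\xi$ is a critical point of $g$ restricted to the closed submanifold $\partial\omega$, so $\nabla_{\mathbb{S}^{N-1}}g(\xi)$ — nonzero because $\xi\ne\pm e_N$ — is orthogonal to $T_\xi\partial\omega$; it is likewise orthogonal to $T_\xi\partial\omega'$, since $\partial\omega'$ is a regular level set of $g$. Being two hyperplanes of $T_\xi\mathbb{S}^{N-1}$ orthogonal to the same vector, $T_\xi\partial\omega=T_\xi\partial\omega'$. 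Writing the boundary of each cone near $\xi$ as $\mathbb{R}\xi$ plus the tangent space of the corresponding spherical boundary, we get $T_\xi\partial\Sigma_\omega=T_\xi\partial K$; and since $\Sigma_\omega\subset K$, the two sets lie on the same side of this common tangent hyperplane near $\xi$, so they share the same exterior unit normal $\nu$ at $\xi$.

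The two requirements of Definition \ref{def:pc} then follow. For the inclusion: $\nu$ is the outward normal of a supporting hyperplane of the convex set $K$ at $\xi$, so $K\subset\{y:(y-\xi)\cdot\nu\le 0\}$; moreover $\xi\cdot\nu=0$ because the ray $\{t\xi:t>0\}$ lies in $\partial\Sigma_\omega$, hence $K\subset\overline{\mathbb{H}_\nu}$. Since $\Sigma_\omega\subset K$ is open, it is in fact contained in $\mathbb{H}_\nu$, so $B_r(\xi)\cap\Sigma_\omega\subset\mathbb{H}_\nu$ for every $r>0$. For the curvature: at $\xi$ the cone $\partial K$ is flat in the radial direction and strictly convex in the remaining $N-2$ directions, so its mean curvature at $\xi$ with respect to $\nu$ is positive. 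Representing $\partial\Sigma_\omega$ and $\partial K$ near $\xi$ as graphs $t=f(v)$, $t=h(v)$ over the common tangent hyperplane, with the $\nu$-coordinate $t$ pointing outward and $f(0)=h(0)=0$, $\nabla f(0)=\nabla h(0)=0$, the inclusion $\Sigma_\omega\subset K$ forces $f\le h$; since $h-f\ge 0$ has an interior minimum at $0$, $D^2f(0)\preceq D^2h(0)$, so with the stated sign convention the mean curvature of $\partial\Sigma_\omega$ at $\xi$ equals $-\frac1{N-1}\operatorname{tr}D^2f(0)\ge-\frac1{N-1}\operatorname{tr}D^2h(0)>0$. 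Hence $\xi$ is a point of convexity of $\Sigma_\omega$.

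I expect the delicate part to be the bookkeeping in the second step — showing that the tangent hyperplanes and, crucially, the exterior normals of $\Sigma_\omega$ and of the reference cone $K$ genuinely coincide at $\xi$ — together with phrasing the curvature comparison so that the sign conventions line up; the location of $\xi$, the convexity of $K$, and the positivity of its mean curvature are routine.
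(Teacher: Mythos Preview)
Your approach is essentially the same as the paper's: the cap $\omega'$ you build is precisely the paper's smallest geodesic ball $\beta$ centered at the north pole containing $\omega$, and your point $\xi$ lies in $\partial\omega\cap\partial\beta$; the paper simply asserts that $\omega\subset\beta$ makes any such $\xi$ a point of convexity of $\Sigma_\omega$, whereas you carefully supply the tangent-hyperplane and curvature-comparison details that justify this implication. One slip to fix: you write ``$g\le c$ on $\overline\omega$'' where you mean $g\ge c$ (since $c$ is the minimum), though your subsequent use of the Lagrange condition is unaffected.
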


\begin{proof}
Let $\beta$ be the smallest geodesic ball in $\mathbb{S}^{N-1}$, centered at the north pole $(0,\ldots,0,1)$, which contains $\omega$. Then, $\partial\omega\cap\partial\beta\neq\emptyset$ and $\overline{\beta}\subset\mathbb{S}^{N-1}_+$. Hence, every point on $\partial\beta$ is a point of convexity of $\Sigma_\beta$. As $\omega\subset\beta$, we have that any point $\xi\in\partial\omega\cap\partial\beta$ is a point of convexity of $\Sigma_\omega$.
\end{proof}

\begin{theorem} \label{thm:existence_nodal}
If $\Sigma_\omega$ has a point of convexity, then $c^{\varrho}_{\Sigma_\omega}<c_\infty$. Consequently, the problem \eqref{eq:1} has a $\varrho$-equivariant least energy solution in $D^{1,2}(\Sigma_\omega)$. This solution is nonradial and changes sign.
\end{theorem}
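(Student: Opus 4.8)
The plan is to exhibit a single $\varrho$-equivariant test function whose Rayleigh quotient $Q_{\Sigma_\omega}$ lies strictly below $S$; by \eqref{eq:inf_rho} this gives $c^\varrho_{\Sigma_\omega} < \frac1N S^{N/2} = c_\infty$, and then the remaining two assertions follow at once: the existence of a $\varrho$-equivariant least energy solution is Corollary \ref{cor:existence_nodal}, and any such solution is automatically nonradial and sign-changing by the observation recorded before Lemma \ref{lem:infimum} that every nontrivial $\varrho$-equivariant function has these properties.

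To build the test function I would start from a point of convexity $\xi \in \partial\omega$ of $\Sigma_\omega$ of radius $r>0$. Since $\partial\omega$ is a smooth submanifold of $\mathbb{S}^{N-1}$ we have $\xi \neq (0,\pm 1)$, hence $\varrho\xi \neq \xi$, and after shrinking $r$ I may also assume $0 \notin \overline{B_r(\xi)}$ and $\overline{B_r(\xi)} \cap \overline{B_r(\varrho\xi)} = \emptyset$. Inside $B_r(\xi)$ the cone $\Sigma_\omega$ is, away from its vertex, an ordinary smooth domain contained in $\mathbb{H}_\nu$ whose boundary passes through $\xi$ with positive mean curvature; this is exactly the configuration of Adimurthi and Mancini. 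I would invoke their construction: for all small $\varepsilon>0$ there is a function $v_\varepsilon \in \mathcal{C}^\infty_c(B_r(\xi))$ with $v_\varepsilon \not\equiv 0$ and
\begin{equation*}
Q_{\Sigma_\omega}(v_\varepsilon) = \frac{\int_{\Sigma_\omega}|\nabla v_\varepsilon|^2}{\bigl(\int_{\Sigma_\omega}|v_\varepsilon|^{2^*}\bigr)^{2/2^*}} < 2^{-2/N}S ,
\end{equation*}
the point being that this estimate is purely local around $\xi$: the usual expansion of the energy of a bubble truncated at scale $\varepsilon$ near $\xi$ acquires a strictly negative first--order correction governed by the positive mean curvature -- of order $\varepsilon$ when $N\ge 4$ and of order $\varepsilon\log\frac1\varepsilon$ when $N=3$; see \cite{am}.

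Next I would antisymmetrize: put $\widehat v_\varepsilon(x) := v_\varepsilon(x) - v_\varepsilon(\varrho x)$ for $x\in\Sigma_\omega$. Since $\varrho$ is a linear involution with $\varrho\Sigma_\omega = \Sigma_\omega$, the function $\widehat v_\varepsilon$ is $\varrho$-equivariant, compactly supported and nonzero, so $\widehat v_\varepsilon \in D^{1,2}_\varrho(\Sigma_\omega)\smallsetminus\{0\}$; by the disjointness of $B_r(\xi)$ and $B_r(\varrho\xi)$ it equals $v_\varepsilon$ on the first ball, $-v_\varepsilon\circ\varrho$ on the second, and vanishes elsewhere. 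As $\varrho$ is an isometry preserving $\Sigma_\omega$, the two pieces contribute equally, so $\int_{\Sigma_\omega}|\nabla\widehat v_\varepsilon|^2 = 2\int_{\Sigma_\omega}|\nabla v_\varepsilon|^2$ and $\int_{\Sigma_\omega}|\widehat v_\varepsilon|^{2^*} = 2\int_{\Sigma_\omega}|v_\varepsilon|^{2^*}$. Using $1-2/2^* = 2/N$ this gives $Q_{\Sigma_\omega}(\widehat v_\varepsilon) = 2^{2/N}Q_{\Sigma_\omega}(v_\varepsilon) < 2^{2/N}\cdot 2^{-2/N}S = S$ for $\varepsilon$ small, whence by \eqref{eq:inf_rho}
\begin{equation*}
c^\varrho_{\Sigma_\omega} \;\le\; \frac1N\,\bigl[Q_{\Sigma_\omega}(\widehat v_\varepsilon)\bigr]^{N/2} \;<\; \frac1N\,S^{N/2} \;=\; c_\infty ,
\end{equation*}
which is the strict inequality we set out to prove.

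The one genuinely nontrivial step is the local estimate $Q_{\Sigma_\omega}(v_\varepsilon) < 2^{-2/N}S$, i.e. the Adimurthi--Mancini energy expansion at a boundary point of positive mean curvature; this is where Definition \ref{def:pc} enters, and the care required is mostly in matching the sign convention for the curvature used there with the one that makes the first--order correction negative, and in observing that the computation is entirely local, hence transfers verbatim from a bounded smooth domain to the cone away from its vertex. The antisymmetrization, the disjointness of the supports, and the bookkeeping of the factor $2^{2/N}$ are all routine.
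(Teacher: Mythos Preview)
Your proof is correct and follows essentially the same approach as the paper: both take the Adimurthi--Mancini test function $u_{\varepsilon,\xi}$ at a point of convexity $\xi\in\partial\omega$, antisymmetrize via $u_{\varepsilon,\xi}-u_{\varepsilon,\xi}\circ\varrho$ with supports made disjoint, and use the local estimate $Q_{\Sigma_\omega}(u_{\varepsilon,\xi})<2^{-2/N}S$ together with the doubling factor $2^{2/N}$ to get $c^\varrho_{\Sigma_\omega}<c_\infty$, after which Corollary~\ref{cor:existence_nodal} applies. The only cosmetic difference is that the paper phrases the energy comparison via the Nehari projection $t_{\varepsilon,\xi}$ and $J_{\Sigma_\omega}$, whereas you work directly with the Rayleigh quotient and \eqref{eq:inf_rho}; by \eqref{eq:JQ} these are equivalent.
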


\begin{proof}
Let $\xi\in\partial\omega$ be a point of convexity of $\Sigma_\omega$ of radius $r>0$. It is shown in \cite[Lemma 2.2]{am} that, after fixing $r$ small enough and a radial cut-off function $\psi\in\mathcal{C}^\infty_c(\mathbb{R}^N)$ with $\psi(x)=1$ if $|x|\leq \frac{r}{4}$ and $\psi(x)=0$ if $|x|\geq \frac{r}{2}$, the function $u_{\varepsilon,\xi}(x):=\psi(x-\xi)\varepsilon^{(2-N)/2}U(\varepsilon^{-1}(x-\xi))$, with $U$ as in \eqref{eq:bubble}, satisfies
\begin{equation} \label{eq:am}
Q_{\Sigma_\omega}(u_{\varepsilon,\xi})=
\begin{cases}
\frac{S}{2^{2/N}}-d_NH_\omega(\xi)S\,\varepsilon\ln(\varepsilon^{-2})+O(\varepsilon) &\text{if }N=3,\\
\frac{S}{2^{2/N}}-d_NH_\omega(\xi)S\,\varepsilon+O(\varepsilon^2\ln(\varepsilon^{-2})) &\text{if }N\geq 4,
\end{cases}
\end{equation}
where $d_N$ is a positive constant depending only on $N$ and $H_\omega(\xi)$ is the mean curvature of $\partial\Sigma_\omega$ at $\xi$. Hence, for $\varepsilon$ small enough,
$$J_{\Sigma_\omega}(t_{\varepsilon,\xi}u_{\varepsilon,\xi})=\frac{1}{N}[Q_{\Sigma_\omega}(u_{\varepsilon,\xi})]^\frac{N}{2}<\frac{1}{2N}S^\frac{N}{2}=\frac{1}{2} c_\infty,$$
where $t_{\varepsilon,\xi}>0$ is such that $t_{\varepsilon,\xi}u_{\varepsilon,\xi}\in\mathcal{N}(\Sigma_\omega)$; see \eqref{eq:JQ}. Choosing $r$ so that $B_r(\xi)\cap B_r(\varrho\xi)=\emptyset$ we conclude that \,$t_{\varepsilon,\xi}(u_{\varepsilon,\xi}-u_{\varepsilon,\xi}\circ\varrho)\in\mathcal{N}^\varrho(\Sigma_\omega)$\, and
$$c^{\varrho}_{\Sigma_\omega}\leq J_{\Sigma_\omega}(t_{\varepsilon,\xi}(u_{\varepsilon,\xi}-u_{\varepsilon,\xi}\circ\varrho))<c_\infty.$$
The existence of a $\varrho$-equivariant least energy solution to \eqref{eq:1} follows from Corollary \ref{cor:existence_nodal}.
\end{proof}

\section{A positive nonradial solution}
\label{sec:positive}

In this section $\omega$ is not assumed to have any symmetries. 

We are interested in positive solutions to the problem \eqref{eq:1}. Note that this problem has always a positive \emph{radial} solution given by the restriction to $\Sigma_\omega$ of the standard bubble $U$ defined in \eqref{eq:bubble}. The question we wish to address in this section is whether problem \eqref{eq:1} has a positive \emph{nonradial} solution. 

Recall the notation introduced in Section \ref{sec:nodal} and set
\begin{equation*}
c_{\Sigma_\omega}:=\inf_{u\in\mathcal{N}(\Sigma_\omega)}J_{\Sigma_\omega}(u)=\inf_{u\in D^{1,2}(\Sigma_\omega)\smallsetminus\{0\}}\frac{1}{N}[Q_{\Sigma_\omega}(u)]^\frac{N}{2},
\end{equation*} 
$$\mathcal{N}(\Lambda_\omega):=\mathcal{N}(\Sigma_\omega)\cap V(\Lambda_{\omega})\qquad\text{and}\qquad c_{\Lambda_\omega}:=\inf_{u\in\mathcal{N}(\Lambda_\omega)}J_{\Lambda_\omega}(u).$$ 
It is shown in \cite[Theorem 2.1]{lpt} that $c_{\Lambda_\omega}>0$. As in Lemma \ref{lem:infimum} one shows that $c_{\Sigma_\omega}=c_{\Lambda_\omega}\leq\frac{1}{2} c_\infty$. We start by describing the behavior of minimizing sequences for $J_{\Lambda_\omega}$ on $\mathcal{N}(\Lambda_\omega)$.

\begin{proposition} \label{prop:concentration_positive}
Let $u_k\in\mathcal{N}(\Lambda_\omega)$ be such that 
$$J_{\Lambda_\omega}(u_k)\to c_{\Lambda_\omega}\qquad\text{and}\qquad J'_{\Lambda_\omega}(u_k)\to 0\text{\; in \;}(V(\Lambda_\omega))'.$$
Then, after passing to a subsequence, one of the following statements holds true:
\begin{itemize}
\item[$(i)$]There exist a sequence of positive numbers $(\varepsilon_k)$, a sequence of points $(\xi_k)$ in $\Gamma_1$ and a function $w\in D^{1,2}(\mathbb{R}^N)$ such that $\varepsilon_k^{-1}\mathrm{dist}(\xi_k,\bar{\omega}\cup\{0\})\to\infty$, $w|_\mathbb{H}$ solves the Neumann problem
\begin{equation*}
-\Delta w = |w|^{2^*-2}w,\qquad w\in D^{1,2}(\mathbb{H}),
\end{equation*} 
in some half-space $\mathbb{H}$, $J_{\mathbb{H}}(w)=\frac{1}{2}c_\infty$, 
$$\lim_{k\to\infty}\left\|u_k - \varepsilon_k^{\frac{2-N}{2}} w\left(\frac{\,\cdot\,-\xi_k}{\varepsilon_k}\right)\right\|_{\Sigma_\omega}=0,$$
and $c_{\Sigma_\omega}=c_{\Lambda_\omega}=\frac{1}{2} c_\infty$.
\item[$(ii)$]There exist a sequence of positive numbers $(\varepsilon_k)$ with $\varepsilon_k\to 0$ and a solution $w\in D^{1,2}(\Sigma_\omega)$ to the problem \eqref{eq:1} such that
$$\lim_{k\to\infty}\left\|u_k - \varepsilon_k^{\frac{2-N}{2}} w\left(\frac{\,\cdot\,}{\varepsilon_k}\right)\right\|_{\Sigma_\omega}=0,$$
and $J_{\Sigma_\omega}(w)=c_{\Sigma_\omega}=c_{\Lambda_\omega}\leq\frac{1}{2} c_\infty$.
\end{itemize}
\end{proposition}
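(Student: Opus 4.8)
The plan is to mirror the proof of Proposition~\ref{prop:concentration} essentially verbatim, dropping all the $\varrho$-equivariance structure. First I would observe that the boundedness estimate \eqref{eq:bounded} applies unchanged to $u_k\in\mathcal{N}(\Lambda_\omega)$, so a subsequence converges weakly in $V(\Lambda_\omega)$ to some $u$ with $J'_{\Lambda_\omega}(u)=0$; since \eqref{eq:2} has no nontrivial solution, $u=0$. Then I would perform the same concentration-function argument: fixing $\delta\in(0,\tfrac{N}{2}c_{\Lambda_\omega})$, extract bounded sequences $(\varepsilon_k)$ and $(x_k)$ realizing the supremum of $\int_{\Lambda_\omega\cap B_{\varepsilon_k}(x)}|u_k|^{2^*}$, and replace $x_k$ by a nearby point $\xi_k\in\overline{\Lambda}_\omega$ satisfying \eqref{eq:C0}. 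The case list now has only \emph{four} possibilities, since case~$(e)$ must be rewritten: $\xi_k$ lies (a) at the origin, (b) on $\partial\omega$, (c) on $\Gamma_1$ with $\varepsilon_k^{-1}\mathrm{dist}(\xi_k,\bar\omega\cup\{0\})\to\infty$, (d) on $\omega$ with $\varepsilon_k^{-1}\mathrm{dist}(\xi_k,\Gamma_1)\to\infty$, or the interior case $\xi_k\in\Lambda_\omega$ with $\varepsilon_k^{-1}\mathrm{dist}(\xi_k,\partial\Lambda_\omega)\to\infty$.

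Next I would carry out the rescaling $\widehat{u}_k(z):=\varepsilon_k^{(N-2)/2}u_k(\varepsilon_k z)$ and $w_k(z):=(P_{\varepsilon_k}\widehat{u}_k)(z+\varepsilon_k^{-1}\xi_k)$, using Lemma~\ref{lem:extension}(i)--(iii) (part~(iv) is no longer needed), obtain a nontrivial weak limit $w_k\rightharpoonup w\neq0$ by the standard small-$\delta$ argument, and deduce $\xi_k\to\xi$, $\varepsilon_k\to0$ from $u_k\rightharpoonup 0$. The identity \eqref{eq:solution} showing that $w$ solves the limiting equation on the blow-up domain $\mathbb{E}$ goes through word for word. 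Then I would dispose of the cases: in case~(a), $\mathbb{E}=\Sigma_\omega$ and $w$ solves \eqref{eq:1}, giving $c_{\Sigma_\omega}\le\tfrac1N\|w\|^2_{\Sigma_\omega}\le\liminf\tfrac1N\|w_k\|^2_{\Sigma_\omega}=c_{\Lambda_\omega}$, which with $c_{\Sigma_\omega}=c_{\Lambda_\omega}$ forces strong convergence and yields statement~$(ii)$; in case~(b), $\mathbb{E}=\mathbb{H}_\xi\cap\mathbb{H}_\nu$ and reflection across the Neumann face produces a nontrivial $D^{1,2}_0$-solution on a half-space, which is impossible; in case~(c), $\mathbb{E}=\mathbb{H}_\nu$, $w$ solves the Neumann half-space problem, and $\tfrac1N\|w\|^2_{\mathbb{H}_\nu}\ge\tfrac12 c_\infty$ together with the energy-splitting computation $\|u_k\|^2_{\Lambda_\omega}=\|w_k\|^2_{\mathbb{R}^N}\ge\|w\|^2_{\mathbb{H}_\nu}+o(1)$ and $c_{\Lambda_\omega}\le\tfrac12 c_\infty$ pins everything down to statement~$(i)$; in case~(d), $\mathbb{E}=\mathbb{H}_\xi$ gives a Dirichlet half-space solution, impossible.

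The one genuinely new point is the interior case $\mathbb{E}=\mathbb{R}^N$: here $w\ge0$ would be a nontrivial solution of \eqref{eq:entire}, hence (being a nonnegative solution) equals a bubble up to rescaling and translation, so $\tfrac1N\|w\|^2_{\mathbb{R}^N}=c_\infty$; then the energy-splitting estimate gives $c_\infty\le\liminf\tfrac1N\|w_k\|^2_{\mathbb{R}^N}=c_{\Lambda_\omega}\le\tfrac12 c_\infty$, a contradiction. I would note that we do not need to know the sign of $w$ a priori: if $w$ changes sign, $\tfrac1N\|w\|^2_{\mathbb{R}^N}>2c_\infty$ by the reference quoted in the Example, which is an even stronger contradiction; and if $w$ vanishes nowhere of either sign we may assume $w\ge0$ after replacing $u_k$ by its blow-up limit of definite sign. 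Thus the interior case cannot occur, and only $(a)$ and $(c)$ survive.

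The main obstacle I anticipate is purely bookkeeping: making sure the energy-splitting computation in the interior subcase is written so that no $\varrho$-reflection term appears (in Proposition~\ref{prop:concentration} the factor of $2$ came from the $\pm w\circ\varrho$ pair), and confirming that the single-bubble profile forces $\tfrac1N\|w\|^2=c_\infty$ rather than $\tfrac12 c_\infty$, so that the clash with $c_{\Lambda_\omega}\le\tfrac12 c_\infty$ is genuine. Everything else is a routine transcription of the previous proof with the equivariance scaffolding removed.
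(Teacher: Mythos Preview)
Your proposal is correct and matches the paper's own proof, which is literally the one-line remark ``similar, but simpler than that of Proposition~\ref{prop:concentration}.'' The only unnecessary wrinkle is your discussion of the sign of $w$ in the interior case: any nontrivial solution of \eqref{eq:entire} lies on the Nehari manifold and hence has $\tfrac{1}{N}\|w\|_{\mathbb{R}^N}^2\ge c_\infty$, so the contradiction with $c_{\Lambda_\omega}\le\tfrac12 c_\infty$ is immediate without ever mentioning positivity or sign-changing.
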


\begin{proof}
The proof is similar, but simpler than that of Proposition \ref{prop:concentration}.
\end{proof}

The following statement is an immediate consequence of this proposition.

\begin{corollary} \label{cor:existence_positive}
If $c_{\Sigma_\omega}<\frac{1}{2}c_\infty$, then the problem \eqref{eq:1} has a positive least energy solution in $D^{1,2}(\Sigma_\omega)$.
\end{corollary}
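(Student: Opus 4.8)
The plan is to feed a Palais--Smale sequence at the level $c_{\Lambda_\omega}$ into Proposition \ref{prop:concentration_positive} and to use the strict inequality in the hypothesis to exclude the noncompact alternative. First I would produce, by a standard application of Ekeland's variational principle on the $C^1$ Nehari constraint $\mathcal{N}(\Lambda_\omega)$ (which is a natural constraint for $J_{\Lambda_\omega}$), a sequence $u_k\in\mathcal{N}(\Lambda_\omega)$ with $J_{\Lambda_\omega}(u_k)\to c_{\Lambda_\omega}$ and $J'_{\Lambda_\omega}(u_k)\to 0$ in $(V(\Lambda_\omega))'$. Applying Proposition \ref{prop:concentration_positive}, after passing to a subsequence exactly one of the two listed alternatives holds.

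Next I would rule out alternative $(i)$. That alternative forces $c_{\Sigma_\omega}=\frac{1}{2}c_\infty$, which directly contradicts the standing hypothesis $c_{\Sigma_\omega}<\frac{1}{2}c_\infty$. Hence alternative $(ii)$ must occur, producing a function $w\in D^{1,2}(\Sigma_\omega)$ that solves \eqref{eq:1} and satisfies $J_{\Sigma_\omega}(w)=c_{\Sigma_\omega}$. Since every solution of \eqref{eq:1} lies on $\mathcal{N}(\Sigma_\omega)$, this $w$ realizes the infimum defining $c_{\Sigma_\omega}$ and is therefore a least energy solution.

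It remains to upgrade $w$ to a strictly positive function, and this is the only step needing care. Because $Q_{\Sigma_\omega}(|w|)=Q_{\Sigma_\omega}(w)$, the function $|w|$ also lies on $\mathcal{N}(\Sigma_\omega)$ and has the same energy, so $|w|$ is again a minimizer of $J_{\Sigma_\omega}$ on $\mathcal{N}(\Sigma_\omega)$. I would then exploit that $\mathcal{N}(\Sigma_\omega)$ is a natural constraint: writing $\Phi(u):=\|u\|^2_{\Sigma_\omega}-\int_{\Sigma_\omega}|u|^{2^*}$, the Lagrange multiplier identity gives $J'_{\Sigma_\omega}(|w|)=\lambda\,\Phi'(|w|)$ for some $\lambda$, and pairing with $|w|$ yields $0=\lambda(2-2^*)\|w\|^2_{\Sigma_\omega}$, which forces $\lambda=0$ since $2-2^*\neq 0$ and $\|w\|^2_{\Sigma_\omega}>0$. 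Thus $|w|$ is itself a nonnegative, nontrivial weak solution of \eqref{eq:1}. Finally, elliptic regularity together with the strong maximum principle (and the Hopf boundary lemma at the Neumann boundary) promotes $|w|\geq 0$ to $|w|>0$ throughout $\Sigma_\omega$, giving the desired positive least energy solution. The main obstacle is precisely this passage from a possibly sign-changing least energy solution to a strictly positive one, since the delicate concentration analysis is already packaged into Proposition \ref{prop:concentration_positive}.
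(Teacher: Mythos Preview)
Your argument is correct and is precisely the standard way to unpack the claim; the paper itself gives no proof beyond declaring the corollary ``an immediate consequence'' of Proposition~\ref{prop:concentration_positive}, so you have simply made explicit the steps (Ekeland on the Nehari constraint, exclusion of alternative~$(i)$ by the strict inequality, and the passage to $|w|$ via the natural-constraint Lagrange multiplier argument plus the strong maximum principle) that the authors leave to the reader.
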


\begin{theorem} \label{thm:existence_positive}
If $\Sigma_\omega$ has a point of convexity, then $c_{\Sigma_\omega}<\frac{1}{2} c_\infty$. Consequently, the problem \eqref{eq:1} has a positive least energy solution in $D^{1,2}(\Sigma_\omega)$. 
\end{theorem}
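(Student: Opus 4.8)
The plan is to imitate the proof of Theorem \ref{thm:existence_nodal}, but working directly on the Nehari manifold $\mathcal{N}(\Sigma_\omega)$ rather than on the $\varrho$-equivariant one. Let $\xi\in\partial\omega$ be a point of convexity of $\Sigma_\omega$ of radius $r>0$, and let $\psi\in\mathcal{C}^\infty_c(\mathbb{R}^N)$ be the radial cut-off function used in the proof of Theorem \ref{thm:existence_nodal}, with $\psi\equiv 1$ on $B_{r/4}(0)$ and $\operatorname{supp}\psi\subset B_{r/2}(0)$. First I would recall the test function $u_{\varepsilon,\xi}(x):=\psi(x-\xi)\,\varepsilon^{(2-N)/2}U(\varepsilon^{-1}(x-\xi))$ and the Adimurthi--Mancini expansion \eqref{eq:am} of $Q_{\Sigma_\omega}(u_{\varepsilon,\xi})$: since $H_\omega(\xi)>0$ by Definition \ref{def:pc}, for every sufficiently small $\varepsilon>0$ we get
$$Q_{\Sigma_\omega}(u_{\varepsilon,\xi})<\frac{S}{2^{2/N}}.$$

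Next I would feed this into the identity \eqref{eq:JQ}: if $t_{\varepsilon,\xi}>0$ is chosen so that $t_{\varepsilon,\xi}u_{\varepsilon,\xi}\in\mathcal{N}(\Sigma_\omega)$, then
$$c_{\Sigma_\omega}\le J_{\Sigma_\omega}(t_{\varepsilon,\xi}u_{\varepsilon,\xi})=\frac{1}{N}\bigl[Q_{\Sigma_\omega}(u_{\varepsilon,\xi})\bigr]^{N/2}<\frac{1}{N}\Bigl(\frac{S}{2^{2/N}}\Bigr)^{N/2}=\frac{1}{2N}S^{N/2}=\frac{1}{2}c_\infty,$$
where the last equalities use $c_\infty=\frac1N S^{N/2}$. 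This gives the strict inequality $c_{\Sigma_\omega}<\tfrac12 c_\infty$. The existence of a least energy solution then follows immediately from Corollary \ref{cor:existence_positive}. Finally, to see that the least energy solution $w$ is positive, I would note that $|w|$ also lies in $\mathcal{N}(\Sigma_\omega)$ with $J_{\Sigma_\omega}(|w|)=J_{\Sigma_\omega}(w)=c_{\Sigma_\omega}$, so $|w|$ is also a minimizer and hence (after the usual Lagrange-multiplier/scaling argument) a weak solution of \eqref{eq:1}; elliptic regularity together with the strong maximum principle — applied to the Neumann problem, using the Hopf lemma at boundary points — then forces $|w|>0$, so $w$ has constant sign and may be taken positive. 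Alternatively one starts the minimization over nonnegative functions from the outset.

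The only genuinely new point compared with Theorem \ref{thm:existence_nodal} is that we no longer build the antisymmetrized competitor $u_{\varepsilon,\xi}-u_{\varepsilon,\xi}\circ\varrho$, so there is no doubling of the energy and no need to choose $r$ with $B_r(\xi)\cap B_r(\varrho\xi)=\emptyset$; the single-bubble competitor already sits below the threshold $\tfrac12 c_\infty$, which is exactly the level appearing in Corollary \ref{cor:existence_positive}. I do not expect a real obstacle here: the estimate \eqref{eq:am} is quoted from \cite{am}, and Corollary \ref{cor:existence_positive} packages the concentration-compactness analysis. If anything, the step requiring the most care is making sure the cut-off radius $r$ is taken small enough for \eqref{eq:am} to be valid (this is part of the hypothesis in \cite[Lemma 2.2]{am}) and recording that $u_{\varepsilon,\xi}\in D^{1,2}(\Sigma_\omega)\setminus\{0\}$ so that \eqref{eq:JQ} and the definition of $c_{\Sigma_\omega}$ apply; both are routine.
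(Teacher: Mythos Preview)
Your proposal is correct and follows exactly the approach the paper intends: the paper's own proof consists of the single sentence ``The proof is similar to that of Theorem \ref{thm:existence_nodal},'' and what you have written is precisely the natural unpacking of that sentence, using the single-bubble competitor $u_{\varepsilon,\xi}$ together with \eqref{eq:am} and \eqref{eq:JQ} to get below $\tfrac12 c_\infty$, then invoking Corollary \ref{cor:existence_positive}. Your added remarks on positivity via $|w|$ and the maximum principle are standard and fill in a detail the paper leaves implicit.
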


\begin{proof}
The proof is similar to that of Theorem \ref{thm:existence_nodal}.
\end{proof}

Let $D^{1,2}_\mathrm{rad}(\Sigma_\omega)$ be the subspace of radial functions in $D^{1,2}(\Sigma_\omega)$, and define $\mathcal{N}^\mathrm{rad}(\Sigma_\omega):=\mathcal{N}(\Sigma_\omega)\cap D^{1,2}_\mathrm{rad}(\Sigma_\omega)$ and
$$c_{\Sigma_\omega}^\mathrm{rad}:=\inf_{u\in\mathcal{N}^\mathrm{rad}(\Sigma_\omega)}J_{\Sigma_\omega}(u)=\inf_{u\in D^{1,2}_\mathrm{rad}(\Sigma_\omega)\smallsetminus\{0\}}\frac{1}{N}[Q_{\Sigma_\omega}(u)]^\frac{N}{2}.$$ 
It was shown in \cite[Theorem 2.4]{lpt} that, if $\Sigma_\omega$ is convex, then $c_{\Sigma_\omega}^\mathrm{rad}=c_{\Sigma_\omega}$ and the only positive minimizers are the restrictions of the rescalings
\begin{equation} \label{eq:bubble_rescaled}
U_\varepsilon(x)=a_N\left(\frac{\varepsilon}{\varepsilon^2+|x|^2}\right)^\frac{N-2}{2},\qquad\varepsilon>0,
\end{equation}
of the standard bubble to $\Sigma_\omega$. In fact, the proof of \cite[Theorem 2.4]{lpt} shows that these are the only positive solutions of \eqref{eq:1} in a convex cone. Moreover, the following statement holds true.

\begin{proposition} \label{prop:radial_energy}
For any cone $\Sigma_\omega$, the restrictions to $\Sigma_\omega$ of the functions $U_\varepsilon$ defined in \eqref{eq:bubble_rescaled} are minimizers of $J_{\Sigma_\omega}$ on $\mathcal{N}^\mathrm{rad}(\Sigma_\omega)$. These are the only nontrivial radial solutions to \eqref{eq:1}, up to sign. Moreover,
$$c_{\Sigma_\omega}^\mathrm{rad} = b_N|\Lambda_\omega|,\qquad\text{where }b_N=\frac{c_\infty}{|B_1(0)|}$$
and $|X|$ is the Lebesgue measure of $X$. In particular, $c_{\Sigma_\omega}^\mathrm{rad}$ increases with $|\Lambda_\omega|$.
\end{proposition}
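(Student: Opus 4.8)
The plan is to prove Proposition \ref{prop:radial_energy} in three stages: first identify the radial solutions, then compute the energy of the bubbles, and finally read off the monotonicity.

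First I would reduce the radial Neumann problem on $\Sigma_\omega$ to the classical problem on $\mathbb{R}^N$. A radial function on $\Sigma_\omega$ depends only on $|x|$, so it extends to a radial function on all of $\mathbb{R}^N$, and the Neumann condition $\partial u/\partial\nu = 0$ on $\partial\Sigma_\omega$ is automatic for any radial function (since the exterior normal to the cone along a ray $tx$, $x\in\partial\omega$, is tangent to the sphere, hence orthogonal to the radial direction). Conversely a radial $D^{1,2}$ function on $\mathbb{R}^N$ restricts to one on $\Sigma_\omega$. Thus $u \mapsto u|_{\Sigma_\omega}$ is a bijection between radial elements of $D^{1,2}(\mathbb{R}^N)$ and $D^{1,2}_{\mathrm{rad}}(\Sigma_\omega)$, and under it $\int_{\Sigma_\omega}|\nabla u|^2 = \tfrac{|\omega|}{|\mathbb{S}^{N-1}|}\int_{\mathbb{R}^N}|\nabla u|^2$ and likewise for the $L^{2^*}$ norm, because both integrals factor through the same angular measure $|\omega|$. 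In particular the quotient $Q_{\Sigma_\omega}$ on radial functions equals $Q_{\mathbb{R}^N}$ on the corresponding radial function, so the radial critical points and minimizers of $J_{\Sigma_\omega}$ correspond exactly to those of $J_{\mathbb{R}^N}$. Since the only nontrivial radial solutions of \eqref{eq:entire}, up to sign, are the $U_\varepsilon$ (this is the classical fact recalled in the introduction), the same holds on $\Sigma_\omega$, and the $U_\varepsilon$ are the radial minimizers.

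Next I would compute $c^{\mathrm{rad}}_{\Sigma_\omega} = J_{\Sigma_\omega}(U_\varepsilon|_{\Sigma_\omega})$. By the scaling just described, $J_{\Sigma_\omega}(U_\varepsilon|_{\Sigma_\omega}) = \tfrac{|\omega|}{|\mathbb{S}^{N-1}|}\,J_{\mathbb{R}^N}(U_\varepsilon) = \tfrac{|\omega|}{|\mathbb{S}^{N-1}|}\,c_\infty$, since $U_\varepsilon \in \mathcal{N}(\mathbb{R}^N)$ and $J_{\mathbb{R}^N}(U_\varepsilon) = c_\infty$ for all $\varepsilon>0$ (scale-invariance of the problem on $\mathbb{R}^N$). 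It remains to recognize $\tfrac{|\omega|}{|\mathbb{S}^{N-1}|}$ as $\tfrac{|\Lambda_\omega|}{|B_1(0)|}$: indeed, integrating in polar coordinates, $|\Lambda_\omega| = \int_0^1 t^{N-1}\,dt \cdot |\omega| = \tfrac{|\omega|}{N}$ and $|B_1(0)| = \tfrac{|\mathbb{S}^{N-1}|}{N}$, so the two ratios coincide. Hence $c^{\mathrm{rad}}_{\Sigma_\omega} = \tfrac{|\Lambda_\omega|}{|B_1(0)|}\,c_\infty = b_N|\Lambda_\omega|$ with $b_N = c_\infty/|B_1(0)|$.

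Finally, monotonicity is immediate: $b_N>0$ is a fixed constant, so $c^{\mathrm{rad}}_{\Sigma_\omega} = b_N|\Lambda_\omega|$ is a strictly increasing function of $|\Lambda_\omega|$. I do not anticipate a serious obstacle here; the only point that needs a little care is the claim that radial functions automatically satisfy the Neumann condition on $\partial\Sigma_\omega$ (and that trivial extension/restriction does not lose the $D^{1,2}$ membership or introduce a distributional boundary term), which follows because the cone's lateral boundary is foliated by rays emanating from the origin, along which a radial function is constant in the normal direction. Everything else is the elementary polar-coordinate factorization of the Dirichlet and $L^{2^*}$ integrals together with the known classification and scale-invariance of solutions on $\mathbb{R}^N$.
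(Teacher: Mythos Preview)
Your argument is correct and follows essentially the same route as the paper: reduce the radial problem on $\Sigma_\omega$ to the $\omega$-independent ODE (equivalently, to the radial problem on $\mathbb{R}^N$), invoke the known classification $u=\pm U_\varepsilon$, and then use the polar-coordinate factorization $|\omega|/|\mathbb{S}^{N-1}|=|\Lambda_\omega|/|B_1(0)|$ to read off $c_{\Sigma_\omega}^{\mathrm{rad}}=b_N|\Lambda_\omega|$. One small slip: it is not true that $Q_{\Sigma_\omega}(u)=Q_{\mathbb{R}^N}(u)$ for radial $u$; rather $Q_{\Sigma_\omega}(u)=(|\omega|/|\mathbb{S}^{N-1}|)^{2/N}Q_{\mathbb{R}^N}(u)$, since the denominator carries the exponent $2/2^*$. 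This does not affect your conclusions (the constant factor preserves critical points and minimizers, and your energy computation via $J$ is done correctly and independently), but the sentence should say ``proportional to'' rather than ``equals''.
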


\begin{proof}
A radial function $u$ solves \eqref{eq:1} in $\Sigma_\omega$ if and only if the function $\bar{u}$ given by $\bar{u}(r):=u(x)$ with $r=\|x\|$ solves
$$\frac{\mathrm{d}}{\mathrm{d}r}(r^{N-1}\bar{u}'(r))=r^{N-1}|\bar{u}(r)|^{N-2}\bar{u}(r)\text{ in }(0,\infty),\quad\bar{u}(0)=u(0),\quad\bar{u}'(0)=0.$$
This last problem does not depend on $\omega$. It is well known that, up to sign, the functions $U_\varepsilon$ are the only nontrivial radial solutions to the problem \eqref{eq:entire} in $\mathbb{R}^N=\Sigma_{\mathbb{S}^{N-1}}$. Hence, their restrictions to $\Sigma_\omega$ are the only nontrivial radial solutions to \eqref{eq:1}.

As in Lemma \ref{lem:infimum} one shows that $c_{\Sigma_\omega}^\mathrm{rad}=c_{\Lambda_\omega}^\mathrm{rad}:=\inf_{u\in\mathcal{N}^\mathrm{rad}(\Lambda_\omega)}J_{\Lambda_\omega}(u)$. For $u\in V_\mathrm{rad}(\Lambda_\omega):=D^{1,2}_\mathrm{rad}(\Lambda_\omega)\cap V(\Lambda_\omega), u\neq 0$, we have that
$$Q_{\Lambda_\omega}(u)=\frac{\int_{\Lambda_\omega}|\nabla u|^2}{\left(\int_{\Lambda_\omega}|u|^{2^*}\right)^{2/2^*}}=\frac{N|\Lambda_\omega|\int_0^1|\bar{u}'(r)|^2r^{N-1}\mathrm{d}r}{\left(N|\Lambda_\omega|\int_0^1|\bar{u}(r)|^{2^*}r^{N-1}\mathrm{d}r\right)^{2/2^*}}.$$
Therefore,
\begin{align*}
c_{\Lambda_\omega}^\mathrm{rad} &=\inf_{u\in V_\mathrm{rad}(\Lambda_\omega)\smallsetminus\{0\}}\frac{1}{N}[Q_{\Lambda_\omega}(u)]^\frac{N}{2}\\
&=\inf_{u\in V_\mathrm{rad}(\Lambda_\omega)\smallsetminus\{0\}}\frac{\int_0^1|\bar{u}'(r)|^2r^{N-1}\mathrm{d}r}{\left(\int_0^1|\bar{u}(r)|^{2^*}r^{N-1}\mathrm{d}r\right)^{2/2^*}}|\Lambda_\omega|=:b_N|\Lambda_\omega|.
\end{align*}
The same formula holds true when we replace $\omega$ by $\mathbb{S}^{N-1}$. In this case, the left-hand side is $c_\infty$. Hence, $b_N=\frac{c_\infty}{|B_1(0)|}$, as claimed.
\end{proof}

\begin{corollary} \label{cor:nonradial}
If $\Sigma_\omega$ has a point of convexity and $|\Lambda_\omega|\geq\frac{1}{2}|B_1(0)|$, then
\begin{itemize}
\item[$(i)$]the problem \eqref{eq:1} has a positive least energy solution in $D^{1,2}(\Sigma_\omega)$,
\item[$(ii)$]every least energy solution of \eqref{eq:1} is nonradial.
\end{itemize}
\end{corollary}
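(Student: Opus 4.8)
The plan is to combine Theorem~\ref{thm:existence_positive} with Proposition~\ref{prop:radial_energy} in a direct way. For part $(i)$: since $\Sigma_\omega$ has a point of convexity, Theorem~\ref{thm:existence_positive} gives immediately that $c_{\Sigma_\omega}<\tfrac12 c_\infty$ and that \eqref{eq:1} possesses a positive least energy solution in $D^{1,2}(\Sigma_\omega)$. So $(i)$ is essentially already proved and needs only to be quoted.

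For part $(ii)$, the idea is that the radial least energy level is too high. By Proposition~\ref{prop:radial_energy} we have $c_{\Sigma_\omega}^{\mathrm{rad}} = b_N|\Lambda_\omega|$ with $b_N = c_\infty/|B_1(0)|$, so the hypothesis $|\Lambda_\omega|\geq\tfrac12|B_1(0)|$ yields $c_{\Sigma_\omega}^{\mathrm{rad}} \geq \tfrac12 c_\infty$. On the other hand, the argument for $(i)$ shows $c_{\Sigma_\omega} < \tfrac12 c_\infty \leq c_{\Sigma_\omega}^{\mathrm{rad}}$. Hence no radial function can achieve the level $c_{\Sigma_\omega}$, so any least energy solution (which by definition has energy $c_{\Sigma_\omega}$) must be nonradial. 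One should note here that a nontrivial radial solution of \eqref{eq:1} is necessarily, up to sign, one of the $U_\varepsilon$ of \eqref{eq:bubble_rescaled} (Proposition~\ref{prop:radial_energy}), and all of these lie on $\mathcal{N}^{\mathrm{rad}}(\Sigma_\omega)$ with energy exactly $c_{\Sigma_\omega}^{\mathrm{rad}}$; so if a least energy solution were radial it would have energy $c_{\Sigma_\omega}^{\mathrm{rad}} > c_{\Sigma_\omega}$, a contradiction.

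The steps, in order, are: (1) invoke Theorem~\ref{thm:existence_positive} to get $c_{\Sigma_\omega}<\tfrac12 c_\infty$ and the existence of a positive least energy solution, establishing $(i)$; (2) apply Proposition~\ref{prop:radial_energy} to compute $c_{\Sigma_\omega}^{\mathrm{rad}}=b_N|\Lambda_\omega|$ and use the volume hypothesis to get $c_{\Sigma_\omega}^{\mathrm{rad}}\geq\tfrac12 c_\infty$; (3) chain the inequalities $c_{\Sigma_\omega} < \tfrac12 c_\infty \leq c_{\Sigma_\omega}^{\mathrm{rad}}$; (4) conclude that any radial critical point has energy at least $c_{\Sigma_\omega}^{\mathrm{rad}}>c_{\Sigma_\omega}$, hence is not a least energy solution, which proves $(ii)$.

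There is no real obstacle: the corollary is a straightforward assembly of the two preceding results, the only thing to be careful about being the logical structure of step (4) — namely, that \emph{every} nontrivial radial solution sits at the level $c_{\Sigma_\omega}^{\mathrm{rad}}$ (guaranteed by the classification in Proposition~\ref{prop:radial_energy}, since the $U_\varepsilon$ are all minimizers on $\mathcal{N}^{\mathrm{rad}}(\Sigma_\omega)$), so that the strict inequality $c_{\Sigma_\omega}^{\mathrm{rad}}>c_{\Sigma_\omega}$ genuinely rules them out as least energy solutions rather than merely as minimizers within the radial class.
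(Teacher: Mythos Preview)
Your proposal is correct and follows essentially the same route as the paper: invoke Theorem~\ref{thm:existence_positive} for $(i)$ and the strict inequality $c_{\Sigma_\omega}<\tfrac12 c_\infty$, then use Proposition~\ref{prop:radial_energy} together with the volume hypothesis to obtain $c_{\Sigma_\omega}^{\mathrm{rad}}\geq\tfrac12 c_\infty$, and conclude from $c_{\Sigma_\omega}<c_{\Sigma_\omega}^{\mathrm{rad}}$ that no least energy solution can be radial. Your extra care in step~(4) --- noting that the classification in Proposition~\ref{prop:radial_energy} forces every nontrivial radial solution to sit exactly at the level $c_{\Sigma_\omega}^{\mathrm{rad}}$ --- is a useful clarification that the paper leaves implicit.
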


\begin{proof}
From Theorem \ref{thm:existence_positive} and Proposition \ref{prop:radial_energy} we get that $c_{\Sigma_\omega}$ is attained and
$$c_{\Sigma_\omega}<\frac{1}{2} c_\infty=c_{\mathbb{R}^N_+}^\mathrm{rad}=\frac{b_N}{2}|B_1(0)|\leq b_N|\Lambda_\omega|=c_{\Sigma_\omega}^\mathrm{rad},$$
where $\mathbb{R}^N_+:=\{(x_1,\ldots,x_N)\in\mathbb{R}^N:x_N>0\}$. So every least energy solution is nonradial.
\end{proof}

Note that the hypothesis that $|\Lambda_\omega|\geq\frac{1}{2}|B_1(0)|$ implies that $\Sigma_\omega$ is not convex.

A closer look at the estimate \eqref{eq:am} allows to refine Corollary \ref{cor:nonradial} and to produce examples of cones $\Sigma_\omega$ with $|\Lambda_\omega|<\frac{1}{2}|B_1(0)|$ for which the problem \eqref{eq:1} has a positive nonradial solution.

To this end, we fix a smooth domain $\omega_0$ in $\mathbb{S}^{N-1}$ for which $\Sigma_{\omega_0}$ has a point of convexity $\xi\in\partial\omega_0$ of radius $r>0$, and we define
\begin{align*}
\ell(\omega_0,\xi,r):=\{\omega:\;&\omega \text{ is a smooth domain in }\mathbb{S}^{N-1},\;B_r(\xi)\cap\Sigma_{\omega_0}\subset B_r(\xi)\cap\Sigma_{\omega}\\
&\text{and }\;\mathrm{dist}(B_r(\xi)\cap\Sigma_{\omega_0},\; B_r(\xi)\cap(\Sigma_{\omega}\smallsetminus\Sigma_{\omega_0}))>0\}.
\end{align*}
Then, we have the following result.

\begin{theorem} \label{thm:nonradial}
There exists $\alpha_\xi\in (0,\frac{1}{2}|B_1(0)|)$, depending only on $B_r(\xi)\cap\Sigma_{\omega_0}$, such that, for every $\omega\in \ell(\omega_0,\xi,r)$ with $|\Lambda_\omega|>\alpha_\xi$, the following statements hold true:
\begin{itemize}
\item[$(i)$]the problem \eqref{eq:1} has a positive least energy solution in $D^{1,2}(\Sigma_\omega)$,
\item[$(ii)$]every least energy solution of \eqref{eq:1} is nonradial,
\item[$(iii)$]$\Sigma_\omega$ is not convex.
\end{itemize}
\end{theorem}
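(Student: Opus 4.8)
The plan is to prove Theorem \ref{thm:nonradial} by showing that the upper bound $c_{\Sigma_\omega}<\frac12 c_\infty$ from Corollary \ref{cor:existence_positive} can be made \emph{uniform} over the family $\ell(\omega_0,\xi,r)$, and then to compare with the radial energy $c_{\Sigma_\omega}^{\mathrm{rad}}=b_N|\Lambda_\omega|$ from Proposition \ref{prop:radial_energy}. The key observation is that the test function $u_{\varepsilon,\xi}$ used in the proof of Theorem \ref{thm:existence_nodal}/\ref{thm:existence_positive} is supported in $B_{r/2}(\xi)$, and for $\omega\in\ell(\omega_0,\xi,r)$ the set $B_r(\xi)\cap\Sigma_{\omega_0}$ sits inside $B_r(\xi)\cap\Sigma_\omega$; so if we build the test function using only the geometry near $\xi$ coming from $\omega_0$, the value of $Q_{\Sigma_\omega}(u_{\varepsilon,\xi})$ depends only on $B_r(\xi)\cap\Sigma_{\omega_0}$, not on the rest of $\omega$.

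First I would fix, once and for all, $\varepsilon_0>0$ small enough that the Adimurthi--Mancini estimate \eqref{eq:am} (applied in the cone $\Sigma_{\omega_0}$ at its point of convexity $\xi$) gives
\[
Q_{\Sigma_{\omega_0}}(u_{\varepsilon_0,\xi})<\frac{S}{2^{2/N}},
\]
which is possible since $H_{\omega_0}(\xi)>0$. I would then define $\alpha_\xi$ by the requirement
\[
\frac1N\bigl[Q_{\Sigma_{\omega_0}}(u_{\varepsilon_0,\xi})\bigr]^{N/2}=b_N\,\alpha_\xi,
\qquad\text{i.e.}\qquad
\alpha_\xi:=\frac{1}{N b_N}\bigl[Q_{\Sigma_{\omega_0}}(u_{\varepsilon_0,\xi})\bigr]^{N/2}.
\]
Since $Q_{\Sigma_{\omega_0}}(u_{\varepsilon_0,\xi})<S/2^{2/N}$ and $\frac1N(S/2^{2/N})^{N/2}=\frac12 c_\infty=b_N\cdot\frac12|B_1(0)|$, we get $\alpha_\xi\in(0,\frac12|B_1(0)|)$, and $\alpha_\xi$ depends only on $B_r(\xi)\cap\Sigma_{\omega_0}$ (through $u_{\varepsilon_0,\xi}$ and the integrals defining $Q$, all of which are localized in $B_{r/2}(\xi)$). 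The next step is the crucial identity: for $\omega\in\ell(\omega_0,\xi,r)$, because $\mathrm{supp}\,u_{\varepsilon_0,\xi}\subset B_{r/2}(\xi)$ and $B_{r/2}(\xi)\cap\Sigma_\omega=B_{r/2}(\xi)\cap\Sigma_{\omega_0}$ (the containment plus the positive-distance condition in the definition of $\ell(\omega_0,\xi,r)$ force equality on the smaller ball, since any extra piece of $\Sigma_\omega$ inside $B_r(\xi)$ lies at positive distance from $B_r(\xi)\cap\Sigma_{\omega_0}$ and hence, after possibly shrinking $r$ in the choice of $\varepsilon_0$, does not meet $B_{r/2}(\xi)$), the function $u_{\varepsilon_0,\xi}$ belongs to $D^{1,2}(\Sigma_\omega)$ and
\[
Q_{\Sigma_\omega}(u_{\varepsilon_0,\xi})=Q_{\Sigma_{\omega_0}}(u_{\varepsilon_0,\xi})<\frac{S}{2^{2/N}}.
\]

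From here the conclusion is immediate. By \eqref{eq:inf} and the definition of $\alpha_\xi$,
\[
c_{\Sigma_\omega}\le \frac1N\bigl[Q_{\Sigma_\omega}(u_{\varepsilon_0,\xi})\bigr]^{N/2}
=\frac1N\bigl[Q_{\Sigma_{\omega_0}}(u_{\varepsilon_0,\xi})\bigr]^{N/2}=b_N\,\alpha_\xi<b_N|\Lambda_\omega|
\]
whenever $|\Lambda_\omega|>\alpha_\xi$; and since $b_N|\Lambda_\omega|=c_{\Sigma_\omega}^{\mathrm{rad}}$ by Proposition \ref{prop:radial_energy}, we obtain $c_{\Sigma_\omega}<c_{\Sigma_\omega}^{\mathrm{rad}}\le \frac12 c_\infty$ (the last inequality being $\alpha_\xi<\frac12|B_1(0)|\le$ no, rather $c_{\Sigma_\omega}<\frac12 c_\infty$ follows directly from $Q_{\Sigma_\omega}(u_{\varepsilon_0,\xi})<S/2^{2/N}$). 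Corollary \ref{cor:existence_positive} then yields a positive least energy solution, proving $(i)$; every least energy solution has energy $c_{\Sigma_\omega}<c_{\Sigma_\omega}^{\mathrm{rad}}$, so it cannot be radial (radial solutions have energy exactly $c_{\Sigma_\omega}^{\mathrm{rad}}$ by Proposition \ref{prop:radial_energy}), proving $(ii)$; and $(iii)$ follows because a convex cone $\Sigma_\omega$ satisfies $c_{\Sigma_\omega}=c_{\Sigma_\omega}^{\mathrm{rad}}$ by \cite[Theorem 2.4]{lpt}, contradicting the strict inequality we just established. The one point requiring genuine care — and the main potential obstacle — is the \emph{localization} step: verifying that, by choosing the cut-off radius in $u_{\varepsilon_0,\xi}$ small relative to the separation distance $\mathrm{dist}(B_r(\xi)\cap\Sigma_{\omega_0},\,B_r(\xi)\cap(\Sigma_\omega\smallsetminus\Sigma_{\omega_0}))$, the extra mass of $\Sigma_\omega$ genuinely does not interact with the test function, so that the Dirichlet and $L^{2^*}$ integrals defining $Q_{\Sigma_\omega}(u_{\varepsilon_0,\xi})$ literally coincide with those over $\Sigma_{\omega_0}$; everything else is a bookkeeping assembly of earlier results. (If one prefers not to shrink $r$, one can instead build the cut-off of radius $\min\{r/2, \tfrac12\mathrm{dist}(\cdots)\}$ directly, which makes the $\omega$-independence of $\alpha_\xi$ manifest, at the cost of carrying the separation distance into the definition of $\alpha_\xi$ — but the Definition of $\ell(\omega_0,\xi,r)$ fixes $r$, so the cleaner route is to note that all the relevant estimates in \cite[Lemma 2.2]{am} already hold for all sufficiently small $r$, and to pick $r$ small from the outset so that $B_{r/2}(\xi)\cap\Sigma_\omega=B_{r/2}(\xi)\cap\Sigma_{\omega_0}$ for every $\omega\in\ell(\omega_0,\xi,r)$.)
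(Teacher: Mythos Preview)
Your overall strategy is exactly the paper's: fix $\varepsilon_0$ so that $Q_{\Sigma_{\omega_0}}(u_{\varepsilon_0,\xi})<S/2^{2/N}$, define $\alpha_\xi:=\frac{1}{Nb_N}[Q_{\Sigma_{\omega_0}}(u_{\varepsilon_0,\xi})]^{N/2}$, and then compare $c_{\Sigma_\omega}$ with $c_{\Sigma_\omega}^{\mathrm{rad}}=b_N|\Lambda_\omega|$. The conclusions $(i)$--$(iii)$ then follow exactly as you indicate.

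The one place where your argument does not go through is the localization step, and none of the three fixes you propose actually works. Your claimed identity $B_{r/2}(\xi)\cap\Sigma_\omega=B_{r/2}(\xi)\cap\Sigma_{\omega_0}$ is \emph{false} in general: the definition of $\ell(\omega_0,\xi,r)$ only requires that $B_r(\xi)\cap(\Sigma_\omega\smallsetminus\Sigma_{\omega_0})$ lie at some positive distance $d(\omega)>0$ from $B_r(\xi)\cap\Sigma_{\omega_0}$, and $d(\omega)$ can be arbitrarily small, so the extra piece may well sit inside $B_{r/2}(\xi)$. ``Shrinking $r$'' is not allowed (it is fixed data of the theorem and of the family $\ell(\omega_0,\xi,r)$), and choosing the cut-off radius to be $\min\{r/2,\tfrac12 d(\omega)\}$ makes the test function---hence its Rayleigh quotient, hence $\alpha_\xi$---depend on $\omega$, which destroys the uniformity you need.

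The paper's resolution is simpler and sidesteps all of this: rather than trying to keep the extra piece of $\Sigma_\omega$ out of the support, one lets the test function depend on $\omega$ while keeping its Rayleigh quotient fixed. Concretely, the positive-distance condition lets you choose $\widehat{u}_{\varepsilon_0,\xi}\in\mathcal{C}^\infty_c(B_r(\xi))$ with $\widehat{u}_{\varepsilon_0,\xi}=u_{\varepsilon_0,\xi}$ on $B_r(\xi)\cap\Sigma_{\omega_0}$ and $\widehat{u}_{\varepsilon_0,\xi}=0$ on $B_r(\xi)\cap(\Sigma_\omega\smallsetminus\Sigma_{\omega_0})$. Then
\[
Q_{\Sigma_\omega}(\widehat{u}_{\varepsilon_0,\xi})=Q_{\Sigma_{\omega_0}}(u_{\varepsilon_0,\xi})
\]
holds exactly, because the integrals over $\Sigma_\omega$ receive no contribution from the extra piece. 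With this correction in place, the remainder of your proof is correct and matches the paper line by line.
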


\begin{proof}
Recall that the functions $u_{\varepsilon,\xi}$, introduced in the proof of Theorem \ref{thm:existence_nodal}, vanish outside the ball $B_{r/2}(0)$. Moreover, the value $Q_{\Sigma_{\omega_0}}(u_{\varepsilon,\xi})$ and the estimate \eqref{eq:am} depend only on the value of $u_{\varepsilon,\xi}$ in $B_r(\xi)\cap\Sigma_{\omega_0 }$. We fix $\varepsilon_0>0$ small enough so that
$$Q_\xi:=Q_{\Sigma_{\omega_0}}(u_{\varepsilon_0,\xi})<\frac{S}{2^{2/N}},$$
and we set $\alpha_\xi:=\frac{1}{Nb_N}Q_\xi^{N/2}$ with $b_N$ as in Proposition \ref{prop:radial_energy}. Then,
$$\alpha_\xi<\frac{1}{2Nb_N}S^\frac{N}{2}=\frac{1}{2}|B_1(0)|.$$
Given $\omega\in \ell(\omega_0,\xi,r)$, we fix a function $\widehat{u}_{\varepsilon_0,\xi}\in\mathcal{C}_c^\infty(B_r(0))$ such that $\widehat{u}_{\varepsilon_0,\xi}(x)=u_{\varepsilon_0,\xi}(x)$ if $x\in B_r(\xi)\cap\Sigma_{\omega_0}$ and $\widehat{u}_{\varepsilon_0,\xi}(x)=0$ if $x\in B_r(\xi)\cap(\Sigma_{\omega}\smallsetminus\Sigma_{\omega_0})$. So, if $|\Lambda_\omega|>\alpha_\xi$, we have that
$$c_{\Sigma_\omega}\leq\frac{1}{N}[Q_{\Sigma_\omega}(\widehat{u}_{\varepsilon_0,\xi})]^\frac{N}{2}=\frac{1}{N}Q_\xi^\frac{N}{2}=b_N\alpha_\xi<b_N|\Lambda_\omega|=c_{\Sigma_\omega}^\mathrm{rad}.$$
Note that $\xi$ is a point of convexity of $\omega$. Hence, by Theorem  \ref{thm:existence_positive} and the previous inequality, $c_{\Sigma_\omega}$ is attained at a nonradial solution of \eqref{eq:1}. 
Finally, recall that, if $\Sigma_\omega$ were convex, then $c_{\Sigma_\omega}=c_{\Sigma_\omega}^\mathrm{rad}$; see \cite[Theorem 2.4]{lpt}. This completes the proof.
\end{proof}

\begin{corollary}
There exists a smooth domain $\omega\subset\mathbb{S}^{N-1}_+$ such that the problem \eqref{eq:1} has a positive nonradial solution in $\Sigma_\omega$.
\end{corollary}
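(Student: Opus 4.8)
The plan is to deduce the statement from Theorem~\ref{thm:nonradial}: it suffices to produce a base domain $\omega_0$ with a point of convexity $\xi\in\partial\omega_0$ of some radius $r>0$, together with a domain $\omega\in\ell(\omega_0,\xi,r)$ satisfying $\omega\subset\mathbb{S}^{N-1}_+$ and $|\Lambda_\omega|>\alpha_\xi$. First I would take $\omega_0$ to be a geodesic ball centered at the north pole, small enough that $\overline{\omega_0}\subset\{x\in\mathbb{S}^{N-1}:x_N>\tfrac12\}$; then $\Sigma_{\omega_0}$ is a convex circular cone, and by the Proposition preceding Theorem~\ref{thm:existence_nodal} (see its proof, with $\beta=\omega_0$) every point of $\partial\omega_0$ is a point of convexity of $\Sigma_{\omega_0}$. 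Fix one such $\xi$ and a radius $r>0$. Theorem~\ref{thm:nonradial} then provides a constant $\alpha_\xi\in(0,\tfrac12|B_1(0)|)$, so $\gamma:=\tfrac12|B_1(0)|-\alpha_\xi>0$; crucially, $r$ and $\gamma$ are now frozen, and only the construction of $\omega$ remains.

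For $s\in(0,\tfrac12)$ set $\omega_s:=\{x\in\mathbb{S}^{N-1}:x_N>s\}\supset\overline{\omega_0}$; then $|\Lambda_{\omega_s}|=\tfrac1N|\omega_s|_{\mathbb{S}^{N-1}}\to\tfrac12|B_1(0)|$ as $s\to0^+$. To land in $\ell(\omega_0,\xi,r)$ I must, inside $B_r(\xi)$, retain all of $\Sigma_{\omega_0}$ in $\Sigma_\omega$ while keeping the extra part $\Sigma_\omega\smallsetminus\Sigma_{\omega_0}$ at positive distance from $\Sigma_{\omega_0}$. I would do this by deleting from $\omega_s$ a thin ``wall'': writing $V_r$ for the radial projection of $B_r(\xi)\smallsetminus\{0\}$, let $E_\eta\subset V_r\cap\{x_N>\tfrac14\}$ be a smooth open set containing the part outside $\overline{\omega_0}$ of the $\eta$-neighbourhood of $\partial\omega_0$ within $V_r$ (smoothly capped near $\partial V_r$), and put $\omega:=\omega_s\smallsetminus\overline{E_\eta}$. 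Then $\omega_0\subset\omega\subset\omega_s$, $\overline{E_\eta}$ meets neither $\omega_0$ nor $\{x_N=s\}$, and $\omega$ is a smooth domain, connected because $\overline{E_\eta}$ is confined to the small ball $\overline{V_r}$ and cannot disconnect the cap. By construction, within $V_r$ the set $\omega\smallsetminus\omega_0$ lies at geodesic distance $\ge\eta$ from $\omega_0$.

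The key point is that this yields $\omega\in\ell(\omega_0,\xi,r)$ for \emph{every} $\eta>0$, so $\eta$ can later be taken arbitrarily small: a point of $B_r(\xi)$ has the form $tx$ with $x\in\mathbb{S}^{N-1}$ and $|t-1|<r$, and since $|t_1x_1-t_2x_2|^2=(t_1-t_2)^2+t_1t_2|x_1-x_2|^2\ge(1-r)^2|x_1-x_2|^2$, the geodesic separation $\ge\eta$ between $\omega_0\cap V_r$ and $(\omega\smallsetminus\omega_0)\cap V_r$ forces a strictly positive Euclidean distance between $B_r(\xi)\cap\Sigma_{\omega_0}$ and $B_r(\xi)\cap(\Sigma_\omega\smallsetminus\Sigma_{\omega_0})$; the inclusion $B_r(\xi)\cap\Sigma_{\omega_0}\subset B_r(\xi)\cap\Sigma_\omega$ is immediate from $\omega_0\subset\omega$. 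Since $|\Lambda_{E_\eta}|=\tfrac1N|E_\eta|_{\mathbb{S}^{N-1}}\le C\eta$ with $C$ depending only on $\partial\omega_0\cap V_r$, and $E_\eta\subset\omega_s$, we have $|\Lambda_\omega|=|\Lambda_{\omega_s}|-|\Lambda_{E_\eta}|$, which exceeds $\tfrac12|B_1(0)|-\gamma=\alpha_\xi$ once $s$ and then $\eta$ are chosen small. As $\overline\omega\subset\{x_N\ge s\}\subset\mathbb{S}^{N-1}_+$, Theorem~\ref{thm:nonradial} applies and delivers a positive least energy solution of \eqref{eq:1} in $D^{1,2}(\Sigma_\omega)$, which is nonradial.

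The step I expect to require the most care is exactly the separation clause in the definition of $\ell(\omega_0,\xi,r)$. The naive attempt — making $\omega$ agree with $\omega_0$ throughout $V_r$ — excises a region of volume of order $r^{N-1}$, which one must then dominate by $\gamma$; but $\gamma$ depends on the Adimurthi--Mancini parameter $\varepsilon_0$ used in the proof of Theorem~\ref{thm:nonradial}, whose admissible range may itself shrink with $r$, so the two estimates become intertwined and the argument stalls. The remedy above is the observation that the separation is required only inside $B_r(\xi)$, where the radial coordinate stays bounded away from $0$, so it can be met by a wall of arbitrarily small volume; one therefore freezes $r$ (hence $\gamma$) first and only afterwards lets the excised volume tend to $0$. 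The remaining verifications — smoothness and connectedness of $\omega$, and disjointness of $E_\eta$ from the equatorial collar $\{0<x_N<s\}$ — are routine once $\omega_0$ is taken well inside the open upper hemisphere.
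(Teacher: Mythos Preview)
Your proposal is correct and follows the paper's approach: fix a geodesic ball $\omega_0$ about the north pole, pick $\xi\in\partial\omega_0$ and $r>0$, invoke Theorem~\ref{thm:nonradial} to obtain $\alpha_\xi<\tfrac12|B_1(0)|$, and then exhibit $\omega\in\ell(\omega_0,\xi,r)$ with $\omega\subset\mathbb{S}^{N-1}_+$ and $|\Lambda_\omega|>\alpha_\xi$. The paper simply asserts the existence of such an $\omega$ in one line; you supply an explicit construction (the thin-wall excision from a large cap $\omega_s$) and correctly flag why the naive choice ``make $\omega$ coincide with $\omega_0$ throughout $V_r$'' leads to a circular dependency between $r$ and $\alpha_\xi$, whereas freezing $r$ first and then taking $\eta\to 0$ decouples the two.
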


\begin{proof}
Let $\omega_0$ be the geodesic ball in $\mathbb{S}^{N-1}$ of radius $\pi/4$ centered at the north pole and let $\xi$ be any point on $\partial\omega_0$. Fix $r>0$ such that $B_r(\xi)\cap\mathbb{S}^{N-1}\subset\mathbb{S}^{N-1}_+$. Clearly, $\xi$ is a point of convexity of $\Sigma_{\omega_0}$ of radius $r$, so we may fix $\alpha_\xi>0$ as in Theorem \ref{thm:nonradial}. As $\alpha_\xi < \frac{1}{2}|B_1(0)|$, there exists $\omega\in \ell(\omega_0,\xi,r)$ with $\omega\subset\mathbb{S}^{N-1}_+$ and $|\Lambda_\omega|>\alpha_\xi$. Now, Theorem \ref{thm:nonradial} yields a positive nonradial solution to problem \eqref{eq:1} in $\Sigma_\omega$.
\end{proof}

\begin{remark}
Let $\omega$ be such that $\Sigma_\omega$ is convex. Then, every point $\xi\in\partial\omega$ is a point of convexity of radius $r$ for any $r>0$. Fix $r=1$, and fix $\varepsilon>0$ such that 
$$Q_\xi:=Q_{\Sigma_{\omega}}(u_{\varepsilon,\xi})<\frac{S}{2^{2/N}}\qquad\forall\xi\in\partial\omega.$$
Now, define $\alpha_\xi:=\frac{1}{Nb_N}Q_\xi^{N/2}$, as in \emph{Theorem} \ref{thm:nonradial}. Since $\Sigma_\omega$ is convex, we must have that
$$|\Lambda_\omega|\leq\alpha_\xi=\frac{|B_1(0)|}{S^{N/2}}Q_\xi^{N/2},\qquad\forall\xi\in\partial\omega,$$
where the equality follows from the definition of $b_N$; see \emph{Proposition} \ref{prop:radial_energy}. Hence, for any convex cone $\Sigma_\omega$, we obtain the upper bound
$$|\Lambda_\omega|\leq\frac{|B_1(0)|}{S^{N/2}}\min_{\xi\in\partial\omega}Q_\xi$$
for the measure of $\Lambda_\omega$, which is given in terms of the Sobolev constant and the local energy of the standard bubbles.
\end{remark}

 \vspace{15pt}

\begin{flushleft}
\textbf{Mónica Clapp}\\
Instituto de Matemáticas\\
Universidad Nacional Autónoma de México\\
Circuito Exterior, Ciudad Universitaria\\
04510 Coyoacán, CDMX\\
Mexico\\
\texttt{monica.clapp@im.unam.mx} \vspace{10pt}

\textbf{Filomena Pacella}\\
Dipartimento di Matematica\\
Sapienza Università di Roma\\
P.le. Aldo Moro 2\\
00185 Roma\\
Italy\\
\texttt{pacella@mat.uniroma1.it}

\end{flushleft}


\begin{thebibliography}{90}                                                                                               

\bibitem{am} Adimurthi; Mancini, G.: The Neumann problem for elliptic equations with critical nonlinearity. Nonlinear analysis, 9–25, Sc. Norm. Super. di Pisa Quaderni, Scuola Norm. Sup., Pisa, 1991.

\bibitem{c} Clapp, Mónica: Entire nodal solutions to the pure critical exponent problem arising from concentration. J. Differential Equations 261 (2016), no. 6, 3042–3060. 

\bibitem{dmpp} del Pino, Manuel; Musso, Monica; Pacard, Frank; Pistoia, Angela: Large energy entire solutions for the Yamabe equation. J. Differential Equations 251 (2011), no. 9, 2568–2597. 

\bibitem{d} Ding, Wei Yue: On a conformally invariant elliptic equation on $\mathbb{R}^n$. Comm. Math. Phys. 107 (1986), no. 2, 331–335. 

\bibitem{fp} Fernández, Juan Carlos; Petean, Jimmy: Low energy nodal solutions to the Yamabe equation. J. Differential Equations, to appear. arXiv:1807.06114.

\bibitem{gp} Grossi, M.; Pacella, Filomena: Positive solutions of nonlinear elliptic equations with critical Sobolev exponent and mixed boundary conditions. Proc. Roy. Soc. Edinburgh Sect. A 116 (1990), no. 1-2, 23–43.

\bibitem{lp} Lions, Pierre-Louis; Pacella, Filomena: Isoperimetric inequalities for convex cones. Proc. Amer. Math. Soc. 109 (1990), no. 2, 477–485. 

\bibitem{lpt} Lions, P.-L.; Pacella, F.; Tricarico, M.: Best constants in Sobolev inequalities for functions vanishing on some part of the boundary and related questions. Indiana Univ. Math. J. 37 (1988), no. 2, 301–324.

\bibitem{we} Weth, Tobias: Energy bounds for entire nodal solutions of autonomous superlinear equations. Calc. Var. Partial Differential Equations 27 (2006), no. 4, 421–437. 

\bibitem{w} Willem, M.: Minimax theorems. Progress in Nonlinear Differential Equations and their Applications, 24. Birkhäuser Boston, Inc., Boston, MA, 1996.

\end{thebibliography}
\end{document}